\documentclass[a4paper,fleqn]{cas-sc}

\usepackage[authoryear,longnamesfirst]{natbib}

\def\tsc#1{\csdef{#1}{\textsc{\lowercase{#1}}\xspace}}
\tsc{WGM}
\tsc{QE}

\begin{document}
\let\WriteBookmarks\relax
\def\floatpagepagefraction{1}
\def\textpagefraction{.001}

\shorttitle{Generalization of Hallaire-Luikov
Moisture Transfer Equation}    

\shortauthors{Karimov E., Khasanov Sh.}  

\title [mode = title]{Generalization of Hallaire-Luikov  Moisture Transfer Equation: Direct Problem with the \texorpdfstring{$\psi$}{psi}-Prabhakar Operator}  

\tnotemark[1] 

\tnotetext[1]{} 

%

\author[1]{Erkinjon Karimov}

\cormark[1]

\fnmark[1]

\ead{erkinjon.karimov@ugent.be}

\ead[url]{}

\credit{Conceptualization of this study, Methodology, Revision}

\affiliation[1]{organization={Ghent University},
            addressline={Krijgslaan str.297}, 
            city={Ghent},
            postcode={9000}, 
            state={East Flanders},
            country={Uzbekistan}}

\author[2]{Shokhzodbek Khasanov}

\fnmark[2]

\ead{shohzodbekxasanov@gmail.com}

\ead[url]{}

\credit{Methodology, Revision}

\affiliation[2]{organization={Fergana state University},
            addressline={Murabbiylar str. 19}, 
            city={Fergana},
            postcode={150100}, 
            state={Fergana region},
            country={Uzbekistan}}

\cortext[1]{Corresponding author}

\fntext[1]{}


\begin{abstract}
This paper focuses on the analysis of an initial-boundary value (direct) problem 
for the Hallaire-Luikov moisture transfer equation involving the 
$\psi$-Prabhakar integral-differential operator of fractional order. 
We establish the existence, uniqueness, and stability of the solution 
to the formulated problem. To construct the solution, we employ the 
method of separation of variables and the method of successive 
approximations (iteration method), and obtain the solution to the 
considered problem in an explicit form. Furthermore, the solution is 
expressed in terms of a novel quadrivariate Mittag-Leffler-type 
function. An a priori estimate for the problem is also established.
\end{abstract}




\begin{keywords}
Hallaire-Luikov
moisture transfer equation \sep $\psi$-Prabhakar fractional derivative \sep multivariate Mittag-Leffler-type function\sep Multi-term time fractional differential equation
\end{keywords}

\maketitle

\section{Introduction}\label{intro}
Moisture transport in porous and capillary-porous media plays a crucial role in a wide range of natural and engineering processes, including soil drying and infiltration, building physics, geothermal systems, and heat-mass transfer in heterogeneous materials. Classical models of moisture transfer are typically based on diffusion-type equations derived under the assumption of local equilibrium and Fickian transport. One of the earliest and most widely used models is the nonlinear diffusion equation proposed in soil physics, in which the moisture flux is driven solely by gradients of capillary pressure \cite{JCH02}.

However, experimental observations reveal that moisture transport in real porous materials often exhibits non-Fickian behavior, such as inverse moisture flows, delayed response, and memory effects caused by complex pore structures, heterogeneity, and multiscale interactions. To address these phenomena, Hallaire introduced a modified moisture transfer equation incorporating a mixed space-time derivative term, which allows the description of moisture migration from drier to wetter regions \cite{H63}. Subsequently, Luikov proposed a further modification accounting for finite propagation speed, leading to the well-known Hallaire-Luikov moisture transfer equation \cite{L65}. These models have been studied extensively and successfully applied in various contexts of heat and mass transfer in porous media.

Despite their success, classical integer-order models remain limited in their ability to capture anomalous diffusion and long-range memory effects inherent in many real materials. This limitation has motivated the introduction of fractional-order generalizations, where the classical time derivative is replaced by a fractional operator. Fractional derivatives naturally incorporate memory and hereditary properties and have proven effective in modeling transport processes in fractal or heterogeneous media. In this direction, several authors have investigated fractional versions of the Hallaire-Luikov equation using Riemann-Liouville \cite{KG19}, Caputo \cite{IMS25}, Hilfer \cite{AKK23}, and related fractional derivatives, establishing well-posedness results and, in some cases, explicit solution representations.

In recent years, increasing attention has been paid to generalized fractional operators with multi-parameter kernels, which provide additional flexibility in modeling relaxation and memory effects beyond simple power-law behavior. Among these, the Prabhakar fractional operator, based on the generalized Mittag–Leffler function, has emerged as a powerful tool for describing multi-scale and non-exponential memory processes \cite{FRS22}. An important further extension is the $\psi$‑Prabhakar fractional operator, where the fractional differentiation is taken with respect to a monotone function $\psi$. This generalization allows the incorporation of non-uniform time scaling and variable temporal metrics, making it particularly suitable for modeling transport processes in heterogeneous or evolving media \cite{O22}.

Motivated by these developments, the present paper is devoted to the analysis of a fractional generalization of the Hallaire-Luikov moisture transfer equation involving the $\psi$-Prabhakar integral-differential operator. The use of the 
$\psi$-Prabhakar operator enables a unified framework that encompasses several known fractional models as special cases, while also allowing a more accurate description of complex memory effects.

The main objective of this work is to study a direct problem for the proposed fractional model in a bounded spatial domain. We establish rigorous results concerning the existence, uniqueness, and stability of solutions under suitable assumptions on the data and parameters. The analysis is based on the eigenfunction expansions and energy-type estimates adapted to the $\psi$-Prabhakar framework.

A notable feature of this study is the derivation of an explicit solution representation in terms of newly introduced multivariate Mittag-Leffler-type functions, including trivariate and quadri-variate generalizations. 

\section{Problem formulation and formal solution}\label{probform}

\subsection{Formulation of a problem and apriori estimate}

Let us consider the following generalized Hallaire-Luikov moisture transfer equation:
\begin{equation}
    a\,\,{}^PD_{0t;\psi }^{\alpha ,\beta  + 1,\gamma ,\delta }u(t,x) + {}^PD_{0t;\psi }^{\alpha ,\beta ,\gamma ,\delta }u(t,x) = {u_{xx}}(t,x) + b\,\,{}^PD_{0t;\psi }^{\alpha ,\theta ,\gamma ,\delta }{u_{xx}}(t,x) + f\left( {t,x} \right),
\label{eq:31}
\end{equation}
in the following domain\,${{\Omega }_{\left( T,1 \right)}}=\left\{ \left( t,x \right):0<t<T,0<x<1\,\right\}$. Here, 
\begin{equation}\label{psiPD}
    {}^PD_{0t;\psi }^{\alpha ,\beta ,\gamma ,\delta }f(t) = {\left( {\frac{1}{{\psi '(t)}}\frac{d}{{dt}}} \right)^n}{}^PI_{0t;\psi }^{\alpha ,n - \beta ,\, - \gamma ,\delta }f(t)
\end{equation}
is the $\psi$-Prabhakar fractional derivative,
\begin{equation}\label{psiPI}
{}^PI_{0t;\psi }^{\alpha ,\beta ,\,\gamma ,\delta }f(t) = \int\limits_0^t {\psi '(s){{\left( {\psi (t) - \psi (s)} \right)}^{\beta  - 1}}E_{\alpha ,\beta }^{  \gamma }\left( {\delta {{\left( {\psi (t) - \psi (s)} \right)}^\alpha }} \right)f\left( s \right)ds,\,\,} t > 0
\end{equation}
is $\psi$-Prabhakar fractional integral, $E_{\alpha ,\beta }^{\gamma }(z) :=\sum\limits_{k=0}^{+\infty }{\frac{{{\left( \gamma  \right)}_{k}}}{\Gamma \left( \alpha k+\beta  \right){k!}}}\,{{z}^{k}}$ represents the Prabhakar function, $n = \left[ \beta  \right] + 1,n \in \mathbb{N},$\,$a,b,T = const > 0,$\,$\alpha ,\beta ,\theta ,\gamma ,\delta \in \mathbb{R},\,\,\alpha >0,\,0<\theta<\beta <1.$  Also, $f\left( t,x \right)$ is a given function, $\psi \left( t \right)\in C^{1}\,[0,T]$ is an increasing function that ${\psi }'\left( t \right)>0$ for all $t\in [0.T]$. \\ 

\textbf{Problem.} 
Find a regular solution $u(t,x)$ of equation \eqref{eq:31} that satisfies 
\begin{itemize}
\item regularity conditions:
${\left( {\psi \left( t \right) - \psi \left( 0 \right)} \right)^{1 - \beta }}u\left( {t,x} \right) \in C\left( {{{\bar \Omega }_{\left( {T,1} \right)}}} \right)\,,\,{}^PD_{0t;\psi }^{\alpha ,\beta  + 1,\gamma ,\delta }u(t,x) \in C\left( {{\Omega _{\left( {T,1} \right)}}} \right),$\\
${}^PD_{0t;\psi }^{\alpha ,\beta ,\gamma ,\delta }u(t,x) \in C\left( {{\Omega _{\left( {T,1} \right)}}} \right),\,{u_{xx}}(t,x) \in C\left( {{\Omega _{\left( {T,1} \right)}}} \right)\,,{}^PD_{0t;\psi }^{\alpha ,\theta ,\gamma ,\delta }{u_{xx}}(t,x) \in C\left( {{\Omega _{\left( {T,1} \right)}}} \right);$
\item initial conditions:
$\mathop {\lim }\limits_{t \to  + 0} {}^PI_{0t;\psi }^{\alpha ,1 - \beta , - \gamma ,\delta }u(t,x) = \omega (x),\,\,\,\,\mathop {\lim }\limits_{t \to  + 0} {}^PD_{0t;\psi }^{\alpha ,\beta ,\gamma ,\delta }u(t,x) = \varphi (x)\,,\,\,x \in \left[ {0,1} \right]\,;$
\item boundary conditions:
$u(t,0) = 0,\,\,u(t,1) = 0\,,\,\,\,t \in \left( {0,T} \right]\,.$
\end{itemize}
Here, $\varphi (x)\,,\,\,\omega (x)\,$ are real given functions such that $\varphi (0)=\omega (0)=0\,$.

{\bf Physical meaning of components of Eq.\eqref{eq:31}.} 
The operator ${}^PD_{0t;\psi}^{\alpha,\beta,\gamma,\delta}u(t,x)$ is a generalized fractional derivative with a Prabhakar kernel and time-scaling function $\psi(t)$ introduces long-term memory into the moisture balance equation. Models delayed moisture accumulation and release due to pore-scale trapping, sorption-desorption hysteresis,
internal material heterogeneity. The function $\psi(t)$ allows modeling nonuniform temporal processes, such as aging or temperature-dependent transport. This term also represents generalized moisture accumulation in the porous medium. Unlike classical storage terms, it accounts for the dependence of the current moisture state on its entire temporal history. ${}^P D_{0t;\psi}^{\alpha,\beta+1,\gamma,\delta} u(t,x)$
corresponds to moisture relaxation or inertia effects. The coefficient $a$ characterizes the resistance of the medium to rapid changes in moisture content and reflects internal structural constraints. $u_{xx}(t,x)$ is the classical Fickian diffusion term, representing moisture flow driven by spatial gradients. $b\,{}^P D_{0t;\psi}^{\alpha,\theta,\gamma,\delta} u_{xx}(t,x)$
introduces memory-dependent diffusion, indicating that the moisture flux depends on past gradients. The coefficient $b$ quantifies the strength of this nonlocal transport mechanism and is associated with time-dependent permeability and pore connectivity. $f(t,x)$ models external or internal sources and sinks of moisture, such as evaporation, condensation, rainfall, or imposed boundary fluxes. 

The following statement is related with the apriori estimate:
\newtheorem{theorem}{Theorem}
\begin{theorem}
    Let $f(t,x)\in C\left( {{{\bar{\Omega }}}_{\left( T,1 \right)}} \right),\varphi \left( x \right)\in C\left[ 0,1 \right],\omega \left( x \right)\in {{C}^{2}}\left[ 0,1 \right],\omega \left( 0 \right)=\omega \left( 1 \right)=0$ and $\psi \left( t \right)\in C^{1}\,[0,T],\,$${\psi }'\left( t \right)>0$ for all $t\in [0.T]$. Then the following apriori estimate holds for the solution of the problem:
    $$
    \frac{a}{2}\left\| {{}^PD_{0t;\psi }^{\alpha ,\beta ,\gamma ,\delta }u(t,x)} \right\|_{{L_2}(0,1)}^2 + \left( {1 - {b_1}} \right)\int\limits_0^t {\psi '\left( s \right)\left\| {{}^PD_{0s;\psi }^{\alpha ,\beta ,\gamma ,\delta }u(s,x)} \right\|_{{L_2}(0,1)}^2ds}- 
    $$
    $$
     2{\varepsilon _1}\int\limits_0^t {\psi '\left( s \right)\left\| {{u_{xx}}(s,x)} \right\|_{{L_2}(0,1)}^2\,} ds - 2b{\varepsilon _2}\int\limits_0^t {\psi '\left( s \right)\left\| {{}^PD_{0s;\psi }^{\alpha ,\theta ,\gamma ,\delta }{u_{xx}}(s,x)} \right\|_{{L_2}(0,1)}^2ds}\le  
    $$
    $$
    2\varepsilon \int\limits_0^t {\psi '\left( s \right)\left\| {f\left( {s,x} \right)} \right\|_{{L_2}(0,1)}^2ds}  + T\left\| {\omega ''\left( x \right)} \right\|_{{L_2}(0,1)}^2 + \frac{a}{2}\left\| {\varphi \left( x \right)} \right\|_{{L_2}(0,1)}^2\,,
    $$
    where $\varepsilon,\varepsilon_{1},\varepsilon_{2},b_{1},T$ are positive real numbers.
\end{theorem}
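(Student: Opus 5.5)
We derive the estimate by an energy argument. Write $v(t,x)={}^PD_{0t;\psi }^{\alpha ,\beta ,\gamma ,\delta }u(t,x)$. Since $0<\beta<1$ we have $n=1$ in \eqref{psiPD}. By the semigroup-type composition of $\psi$-Prabhakar operators (parameters $\beta$ add, $\gamma$ add, same $\alpha,\delta$), the first term of \eqref{eq:31} is essentially $a\,\frac{1}{\psi'(t)}\frac{\partial}{\partial t}v(t,x)$, up to a boundary term at $t=0$ that the initial conditions control.

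\textbf{Step 1: the energy identity.} Rewrite \eqref{eq:31} as
$$a\frac{1}{\psi'(t)}v_t+v=u_{xx}+b\,{}^PD_{0t;\psi }^{\alpha ,\theta ,\gamma ,\delta }u_{xx}+f.$$
Multiply by $\psi'(t)v(t,x)$, integrate over $x\in(0,1)$, then over $s\in(0,t)$. The first term gives $\frac a2\|v(t,\cdot)\|^2-\frac a2\|\varphi\|^2$, using the second initial condition. The second term gives $\int_0^t\psi'(s)\|v(s,\cdot)\|^2ds$.

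\textbf{Step 2: the right-hand side.} Apply the $\varepsilon$-Cauchy inequality $2|pq|\le \varepsilon_i p^2+q^2/\varepsilon_i$ to each of the three products:
$$\int\psi' (u_{xx},v),\qquad b\int\psi'({}^PD^{\alpha,\theta,\gamma,\delta}u_{xx},v),\qquad \int\psi'(f,v).$$
This produces
$$\varepsilon_1\int\psi'\|u_{xx}\|^2,\qquad b\varepsilon_2\int\psi'\|{}^PDu_{xx}\|^2,\qquad \varepsilon\int\psi'\|f\|^2,$$
plus a multiple of $\int\psi'\|v\|^2$. Move that multiple to the left and collect its coefficient as $b_1=\frac1{4\varepsilon_1}+\frac{b}{4\varepsilon_2}+\frac1{4\varepsilon}$. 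Doubling the whole inequality where convenient gives the factors $2\varepsilon_1$, $2b\varepsilon_2$ and $2\varepsilon$ exactly as stated. The terms in $u_{xx}$ and ${}^PDu_{xx}$ are carried to the left with negative sign, which is precisely the form of the theorem.

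\textbf{Step 3: the term $T\|\omega''\|^2$.} This is the main obstacle. It comes from the singular initial behaviour of $u$: the regularity condition only gives $(\psi(t)-\psi(0))^{1-\beta}u$ continuous, and the first initial condition prescribes ${}^PI^{\alpha,1-\beta,-\gamma,\delta}u\to\omega$. The composition of $\frac{1}{\psi'}\partial_t$ with the Prabhakar integral produces a boundary contribution involving $\omega$. After differentiating twice in $x$ (allowed because $\omega\in C^2$ and $\omega(0)=\omega(1)=0$), this becomes a term of the type $\int_0^t\psi'(s)\,(\omega''\cdot\text{kernel},v)\,ds$. I would bound it by Cauchy–Schwarz and absorb it with another $\varepsilon$-inequality, using $t\le T$ to obtain $T\|\omega''\|^2$. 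The delicate part is justifying the integration by parts in time for $s\to0^+$ given the weight $(\psi(s)-\psi(0))^{\beta-1}$. For that I would work on $(\eta,t)$ and let $\eta\to0$, using the regularity conditions and continuity of $\psi'$ on $[0,T]$.
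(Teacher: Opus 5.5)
Your Steps 1--2 already prove the theorem. Step 3 should be dropped because its premise is false. By \eqref{psiPD}, the derivatives of orders $\beta+1$ (with $n=2$) and $\beta$ (with $n=1$) are built from the same integral ${}^PI_{0t;\psi}^{\alpha,1-\beta,-\gamma,\delta}u$. Hence ${}^PD_{0t;\psi}^{\alpha,\beta+1,\gamma,\delta}u=\frac{1}{\psi'(t)}\partial_t\,{}^PD_{0t;\psi}^{\alpha,\beta,\gamma,\delta}u$ holds exactly. This is the rewriting you already use in Step 1, and no boundary term involving $\omega$ arises anywhere.

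Use Young's inequality in the form $|(p,q)|\le\varepsilon_i\|p\|^2+\frac{1}{4\varepsilon_i}\|q\|^2$. Your form $2|pq|\le\varepsilon_ip^2+q^2/\varepsilon_i$ produces the coefficient $2b_1$, and doubling the whole inequality would also double $\frac{a}{2}$. With $v={}^PD_{0t;\psi}^{\alpha,\beta,\gamma,\delta}u$ and all norms in $L_2(0,1)$, Steps 1--2 then give
\[
\begin{array}{l}
\frac{a}{2}\|v(t,\cdot)\|^2+(1-b_1)\int_0^t\psi'(s)\|v(s,\cdot)\|^2ds-\varepsilon_1\int_0^t\psi'(s)\|u_{xx}(s,\cdot)\|^2ds\\
\qquad-\,b\varepsilon_2\int_0^t\psi'(s)\|{}^PD_{0s;\psi}^{\alpha,\theta,\gamma,\delta}u_{xx}(s,\cdot)\|^2ds\le\varepsilon\int_0^t\psi'(s)\|f(s,\cdot)\|^2ds+\frac{a}{2}\|\varphi\|^2.
\end{array}
\]
These integrals and $T\|\omega''\|^2$ are nonnegative. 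So replacing $\varepsilon_1,b\varepsilon_2,\varepsilon$ by $2\varepsilon_1,2b\varepsilon_2,2\varepsilon$ and adding $T\|\omega''\|^2$ on the right only weakens this, and the result is exactly the statement. Your device of working on $(\eta,t)$ and letting $\eta\to0$ belongs in Step 1. There, passing from the pointwise condition $\lim_{t\to+0}v=\varphi$ to $\|v(\eta,\cdot)\|\to\|\varphi\|$ needs some uniformity in $x$, a point the paper also passes over.

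The $\omega''$ term in the paper has a different origin. Before the energy argument, the paper sets $\vartheta=u-h(t)\omega(x)$ with $h(t)=(\psi(t)-\psi(0))^{\beta-1}E^{\gamma}_{\alpha,\beta}(\delta(\psi(t)-\psi(0))^{\alpha})$. This substitution:
\begin{itemize}
\item leaves the derivatives of orders $\beta$ and $\beta+1$ unchanged;
\item makes the first initial condition homogeneous;
\item moves $h(t)\omega''+b\,\frac{(\psi(t)-\psi(0))^{\beta-\theta-1}}{\Gamma(\beta-\theta)}\,\omega''$ into the source $g$.
\end{itemize}
The paper then runs exactly your Steps 1--2 for $\vartheta$. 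Returning to $u$, it splits $\|\vartheta_{xx}\|^2\le2\|u_{xx}\|^2+2h^2\|\omega''\|^2$, and similarly the $\theta$-term and $\|g\|^2$; this is where the factors $2$ and $T=T_1+\dots+T_4$ come from.

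The energy argument uses only the second initial condition, so the substitution is not needed. Undoing it also requires $T<\infty$, i.e.\ $\beta>\frac{1}{2}$ and $\beta-\theta>\frac{1}{2}$, since near $s=0$ the integrands behave like $(\psi(s)-\psi(0))^{2\beta-2}$ and $(\psi(s)-\psi(0))^{2\beta-2\theta-2}$. These conditions are absent from the statement.

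Your direct route is shorter and derives the inequality without these conditions or $\omega\in C^2[0,1]$. It also gives a sharper bound, with no $\omega''$ term and smaller constants. Bear in mind, though, that $u\approx h\,\omega$ near $t=0$. So when $\omega\ne0$, the integrals of $\|u_{xx}\|^2$ and of the $\theta$-term are typically infinite outside that same parameter range, and in either form the estimate is informative only there.
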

\textbf{Proof:}
To prove this statement we introduce new function as
 $$
    \vartheta(t,x) = u(t,x) - {\left( {\psi \left( t \right) - \psi \left( 0 \right)} \right)^{\beta  - 1}}E_{\alpha,\beta }^\gamma \left( {\delta {{\left( {\psi \left( t \right) - \psi \left( 0 \right)} \right)}^\alpha }} \right)\,\omega \left( x \right)\,.
    $$
Then, with respect to the new function, 
and including the following formulas in \cite{P71}  

    $$
     E_{\alpha ,\beta }^0\left( z \right) = \frac{1}{{\Gamma \left( \beta  \right)}},\,\,\,\,\,\,
     E_{\alpha ,0}^0\left( z \right) = 0,
    $$
and the relation in \cite{O21} 
$$
 \begin{array}{l}
{}^PD_{0t;\psi }^{\alpha ,\beta ,\gamma ,\delta }\left( {{{\left( {\psi \left( t \right) - \psi \left( 0 \right)} \right)}^{\nu  - 1}}E_{\alpha ,\nu }^\sigma \left( {\delta {{\left( {\psi \left( t \right) - \psi \left( 0 \right)} \right)}^\alpha }} \right)} \right)\\
 = {\left( {\psi \left( t \right) - \psi \left( 0 \right)} \right)^{\nu  - \beta  - 1}}E_{\alpha ,\nu  - \beta }^{\sigma  - \gamma }\left( {\delta {{\left( {\psi \left( t \right) - \psi \left( 0 \right)} \right)}^\alpha }} \right)\,,
\end{array}
$$
with $\alpha,\beta,\gamma,\delta,\nu,\sigma \in\mathbb{R}$,\,$\alpha  > 0,\beta  \ge 0,\nu  > 0,$ we obtain the following relations 
$$
{u_{xx}}(t,x) = {\vartheta _{xx}}(t,x) + {\left( {\psi \left( t \right) - \psi \left( 0 \right)} \right)^{\beta  - 1}}E_{\alpha ,\beta }^\gamma \left( {\delta {{\left( {\psi \left( t \right) - \psi \left( 0 \right)} \right)}^\alpha }} \right)\,\omega ''\left( x \right);
$$
$$
{}^PD_{0t;\psi }^{\alpha ,\beta ,\gamma ,\delta }u(t,x) = {}^PD_{0t;\psi }^{\alpha ,\beta ,\gamma ,\delta }\vartheta (t,x)\,,
{}^PD_{0t;\psi }^{\alpha ,\beta  + 1,\gamma ,\delta }u(t,x)= {}^PD_{0t;\psi }^{\alpha ,\beta  + 1,\gamma ,\delta }\vartheta (t,x)\,;
$$
$$
{}^PD_{0t;\psi }^{\alpha ,\theta ,\gamma ,\delta }{u_{xx}}(t,x) = {}^PD_{0t;\psi }^{\alpha ,\theta ,\gamma ,\delta }{\vartheta _{xx}}(t,x) + \frac{{{{\left( {\psi \left( t \right) - \psi \left( 0 \right)} \right)}^{\beta  - \theta  - 1}}}}{{\Gamma \left( {\beta  - \theta } \right)}}\,\omega ''\left( x \right)\,;
$$
$$
\mathop {\lim }\limits_{t \to  + 0} {}^PI_{0t;\psi }^{\alpha ,1 - \beta ,-\gamma ,\delta }\vartheta (t,x) = 0,\,\,
\mathop {\lim }\limits_{t \to  + 0} {}^PD_{0t;\psi }^{\alpha ,\beta ,\gamma ,\delta }\vartheta (t,x) = \varphi (x),\,\,
\vartheta (t,0) = \,\vartheta (t,1) = 0.
$$
Since then, we obtain with respect to the new function the following equation    
\begin{equation}
    a\,\,\,{}^PD_{0t;\psi }^{\alpha ,\beta  + 1,\gamma ,\delta }\vartheta (t,x) + {}^PD_{0t;\psi }^{\alpha ,\beta ,\gamma ,\delta }\vartheta (t,x) = {\vartheta _{xx}}(t,x) + \,b\,\,\,{}^PD_{0t;\psi }^{\alpha ,\theta ,\gamma ,\delta }{\vartheta _{xx}}(t,x) + g(t,x),\,\,(t,x) \in {\Omega _{\,\left( {T,1} \right)}}\,,
    \label{eq:32}
\end{equation}
where
$$
g(t,x) = f\left( {t,x} \right) + {\left( {\psi \left( t \right) - \psi \left( 0 \right)} \right)^{\beta  - 1}}E_{\alpha ,\beta }^\gamma \left( {\delta {{\left( {\psi \left( t \right) - \psi \left( 0 \right)} \right)}^\alpha }} \right)\,\omega ''\left( x \right) + \frac{{b\,\omega ''\left( x \right)}}{{{{\left( {\psi \left( t \right) - \psi \left( 0 \right)} \right)}^{1 - \left( {\beta  - \theta } \right)}}\Gamma \left( {\beta  - \theta } \right)}}\,,
$$
and initial conditions
\begin{equation}
\mathop {\lim }\limits_{t \to  + 0} {}^PI_{0t;\psi }^{\alpha ,1 - \beta , - \gamma ,\delta }\vartheta (t,x) = 0,\,\,\,\,\mathop {\lim }\limits_{t \to  + 0} {}^PD_{0t;\psi }^{\alpha ,\beta ,\gamma ,\delta }\vartheta (t,x) = \varphi (x)\,,\,\,x \in \left[ {0,1} \right]\,,
    \label{eq:34}
\end{equation}
boundary conditions
\begin{equation}
\vartheta (t,0) = 0,\,\,\vartheta (t,1) = 0\,,\,\,\,t \in \left( {0,T} \right]\,.
    \label{eq:35}
\end{equation}  
If we multiply both sides of equation~\eqref{eq:32} by 
${}^P D_{0t;\psi}^{\alpha,\beta,\gamma,\delta}\vartheta(t,x)$
and integrate the resulting expression with respect to $x$ from $0$ to $1$, then we obtain:
\begin{equation}
    \begin{array}{l}
a\,\left( {{}^PD_{0t;\psi }^{\alpha ,\beta  + 1,\gamma ,\delta }\vartheta (t,x),{}^PD_{0t;\psi }^{\alpha ,\beta ,\gamma ,\delta }\vartheta (t,x)} \right) + \left( {{}^PD_{0t;\psi }^{\alpha ,\beta ,\gamma ,\delta }\vartheta (t,x),{}^PD_{0t;\psi }^{\alpha ,\beta ,\gamma ,\delta }\vartheta (t,x)} \right) = \left( {{\vartheta _{xx}}(t,x),{}^PD_{0t;\psi }^{\alpha ,\beta ,\gamma ,\delta }\vartheta (t,x)} \right) \\
+ b\left( {{}^PD_{0t;\psi }^{\alpha ,\theta ,\gamma ,\delta }{\vartheta _{xx}}(t,x),{}^PD_{0t;\psi }^{\alpha ,\beta ,\gamma ,\delta }\vartheta (t,x)} \right) + \left( {f\left( {t,x} \right),{}^PD_{0t;\psi }^{\alpha ,\beta ,\gamma ,\delta }\vartheta (t,x)} \right)\,.
 \label{eq:36}
\end{array}
\end{equation}
We simplify each term of equation~\eqref{eq:36}, 
considering the boundary condition~\eqref{eq:35}, Cauchy-Bunyakovsky and $\varepsilon -$ Young inequalities in \cite{Samarskiy89}
$$
 \left| {\left( {u,v} \right)} \right| \le \left\| u \right\|_{{L_2}(0,1)}\left\| v \right\|_{{L_2}(0,1)}  \le \varepsilon {\left\| u \right\|_{{L_2}(0,1)}^2} + \frac{1}{{4\varepsilon }}{\left\| v \right\|_{{L_2}(0,1)}^2},
$$
and the following notations:
$$
\left( {u,v} \right) = \int\limits_0^1 {uvdx} ,\,\,\,\,\left( {u,u} \right) = \left\| u \right\|_{{L_2}(0,1)}^2\,.
$$
Namely,
$$
\begin{array}{l}
a\,\left( {{}^PD_{0t;\psi }^{\alpha ,\beta  + 1,\gamma ,\delta }\vartheta (t,x),{}^PD_{0t;\psi }^{\alpha ,\beta ,\gamma ,\delta }\vartheta (t,x)} \right)\\
 = a\int\limits_0^1 {\frac{1}{{\Gamma \left( {1 - \beta } \right)}}{{\left( {\frac{1}{{\psi '\left( t \right)}}\frac{d}{{dt}}} \right)}^2}\int\limits_0^t {\psi '\left( s \right){{\left( {\psi \left( t \right) - \psi \left( s \right)} \right)}^{ - \beta }}E_{\alpha ,1 - \beta }^{ - \gamma }} \left( {\delta {{\left( {\psi \left( t \right) - \psi \left( s \right)} \right)}^\alpha }} \right)\vartheta (s,x)ds} \\
 \times \frac{1}{{\Gamma \left( {1 - \beta } \right)}}\left( {\frac{1}{{\psi '\left( t \right)}}\frac{d}{{dt}}} \right)\int\limits_0^t {\psi '\left( s \right){{\left( {\psi \left( t \right) - \psi \left( s \right)} \right)}^{ - \beta }}E_{\alpha ,1 - \beta }^{ - \gamma }} \left( {\delta {{\left( {\psi \left( t \right) - \psi \left( s \right)} \right)}^\alpha }} \right)\vartheta (s,x)ds\,dx\\
 = \frac{a}{2}\int\limits_0^1 {\left( {\frac{1}{{\psi '\left( t \right)}}\frac{d}{{dt}}} \right){{\left( {{}^PD_{0t;\psi }^{\alpha ,\beta ,\gamma ,\delta }\vartheta (t,x)} \right)}^2}dx}  = \frac{a}{2}\left( {\frac{1}{{\psi '\left( t \right)}}\frac{d}{{dt}}} \right)\left\| {{}^PD_{0t;\psi }^{\alpha ,\beta ,\gamma ,\delta }\vartheta (t,x)} \right\|_{{L_2}(0,1)}^2,
\end{array}
$$
$$
\left( {{}^PD_{0t;\psi }^{\alpha ,\beta ,\gamma ,\delta }\vartheta (t,x),{}^PD_{0t;\psi }^{\alpha ,\beta ,\gamma ,\delta }\vartheta (t,x)} \right) = \left\| {{}^PD_{0t;\psi }^{\alpha ,\beta ,\gamma ,\delta }\vartheta (t,x)} \right\|_{{L_2}(0,1)}^2,
$$
$$
\left( {g\left( {t,x} \right),{}^PD_{0t;\psi }^{\alpha ,\beta ,\gamma ,\delta }\vartheta (t,x)} \right) \le \,\,\,{{\varepsilon }}\left\| {g\left( {t,x} \right)} \right\|_{{L_2}(0,1)}^2 + \frac{1}{{4\varepsilon }} \left\| {{}^PD_{0t;\psi }^{\alpha ,\beta ,\gamma ,\delta }\vartheta (t,x)} \right\|_{{L_2}(0,1)}^2\,\,,
$$
$$
    \left( {{\vartheta _{xx}}(t,x),{}^PD_{0t;\psi }^{\alpha ,\beta ,\gamma ,\delta }\vartheta (t,x)} \right) \le {\varepsilon _1}\left\| {{\vartheta _{xx}}(t,x)} \right\|_{{L_2}(0,1)}^2 + \frac{1}{{4{\varepsilon _1}}}\left\| {{}^PD_{0t;\psi }^{\alpha ,\beta ,\gamma ,\delta }\vartheta (t,x)} \right\|_{{L_2}(0,1)}^2\,,
$$
$$
b\left( {{}^PD_{0t;\psi }^{\alpha ,\theta ,\gamma ,\delta }{\vartheta _{xx}}(t,x),{}^PD_{0t;\psi }^{\alpha ,\beta ,\gamma ,\delta }\vartheta (t,x)} \right) \le b{\varepsilon _2}\left\| {{}^PD_{0t;\psi }^{\alpha ,\theta ,\gamma ,\delta }{\vartheta _{xx}}(t,x)} \right\|_{{L_2}(0,1)}^2 + \frac{b}{{4{\varepsilon _2}}}\left\| {{}^PD_{0t;\psi }^{\alpha ,\beta ,\gamma ,\delta }\vartheta (t,x)} \right\|_{{L_2}(0,1)}^2\,.
$$
Consequently, we reduce equation \eqref{eq:36} to the following inequality:

\begin{equation}
\begin{array}{l}
\frac{a}{2}\left( {\frac{1}{{\psi '\left( t \right)}}\frac{d}{{dt}}} \right)\left\| {{}^PD_{0t;\psi }^{\alpha ,\beta ,\gamma ,\delta }\vartheta (t,x)} \right\|_{{L_2}(0,1)}^2 + \left( {1 - \frac{1}{{4{\varepsilon _1}}} - \frac{b}{{4{\varepsilon _2}}} - \frac{1}{{4\varepsilon }}} \right)\left\| {{}^PD_{0t;\psi }^{\alpha ,\beta ,\gamma ,\delta }\vartheta (t,x)} \right\|_{{L_2}(0,1)}^2 \\
\le {\varepsilon _1}\left\| {{\vartheta _{xx}}(t,x)} \right\|_{{L_2}(0,1)}^2 + b{\varepsilon _2}\left\| {{}^PD_{0t;\psi }^{\alpha ,\theta ,\gamma ,\delta }{\vartheta _{xx}}(t,x)} \right\|_{{L_2}(0,1)}^2 + \varepsilon \left\| {g\left( {t,x} \right)} \right\|_{{L_2}(0,1)}^2.
\end{array}
    \label{eq:37}
\end{equation}

If we apply integral operator ${}^PI_{0t;\psi }^{\alpha ,1,0,\delta }$ to both side of inequality \eqref{eq:37}, then
$$
\begin{array}{l}
\frac{a}{2}{}^PI_{0t;\psi }^{\alpha ,1,0,\delta }\left( {\frac{1}{{\psi '\left( t \right)}}\frac{d}{{dt}}} \right)\left\| {{}^PD_{0t;\psi }^{\alpha ,\beta ,\gamma ,\delta }\vartheta (t,x)} \right\|_{{L_2}(0,1)}^2 + \left( {1 - \frac{1}{{4{\varepsilon _1}}} - \frac{b}{{4{\varepsilon _2}}} - \frac{1}{{4\varepsilon }}} \right){}^PI_{0t;\psi }^{\alpha ,1,0,\delta }\left\| {{}^PD_{0t;\psi }^{\alpha ,\beta ,\gamma ,\delta }\vartheta (t,x)} \right\|_{{L_2}(0,1)}^2 \\
\le {\varepsilon _1}{}^PI_{0t;\psi }^{\alpha ,1,0,\delta }\left\| {{\vartheta _{xx}}(t,x)} \right\|_{{L_2}(0,1)}^2 + b{\varepsilon _2}{}^PI_{0t;\psi }^{\alpha ,1,0,\delta }\left\| {{}^PD_{0t;\psi }^{\alpha ,\theta ,\gamma ,\delta }{\vartheta _{xx}}(t,x)} \right\|_{{L_2}(0,1)}^2 + \varepsilon {}^PI_{0t;\psi }^{\alpha ,1,0,\delta }\left\| {g\left( {t,x} \right)} \right\|_{{L_2}(0,1)}^2.
\end{array}
$$
In the particular case of the formula in \cite{O21}
\begin{equation}
        {}^PI_{0t;\psi }^{\alpha ,\beta ,\,\gamma ,\delta }\,{}^PD_{0t;\psi }^{\alpha ,\beta ,\gamma ,\delta }f(t) = f\left( t \right) - \sum\limits_{k = 1}^n {{A_k}{{\left( {\psi \left( t \right) - \psi \left( 0 \right)} \right)}^{\beta  - k}}E_{\alpha ,\beta  + 1 - k}^\gamma \left( {\delta {{\left( {\psi \left( t \right) - \psi \left( 0 \right)} \right)}^\alpha }} \right)} ,
    \label{eq:30}
    \end{equation}
where ${A_k} = \mathop {\lim }\limits_{t \to  + a} {}^PD_{0t;\psi }^{\alpha ,\beta  - k,\gamma ,\delta }f(t)\,$ and $\alpha,\beta,\gamma,\delta \in \mathbb{R}$,\, $f\in  C^n( \Omega.$\,\,\, For$f\in  C^{1}( \Omega$,we obtain the following equality
$$
{}^PI_{at;\psi }^{\alpha ,1,0,\delta }\left( {\frac{1}{{\psi '\left( t \right)}}\frac{d}{{dt}}} \right)f(t) = f(t) - f(a),\,\,\,\,f(a) = \mathop {\lim }\limits_{t \to  + a} f(t)\,.
$$
Using the last equality and second condition \eqref{eq:34}, we get
\begin{equation}
\begin{array}{l}
\frac{a}{2}\left\| {{}^PD_{0t;\psi }^{\alpha ,\beta ,\gamma ,\delta }\vartheta (t,x)} \right\|_{{L_2}(0,1)}^2 + \left( {1 - \frac{1}{{4{\varepsilon _1}}} - \frac{b}{{4{\varepsilon _2}}} - \frac{1}{{4\varepsilon }}} \right)\int\limits_0^t {\psi '\left( s \right)\left\| {{}^PD_{0s;\psi }^{\alpha ,\beta ,\gamma ,\delta }\vartheta (s,x)} \right\|_{{L_2}(0,1)}^2ds} \\
\le {\varepsilon _1}\int\limits_0^t {\psi '\left( s \right)\left\| {{\vartheta _{xx}}(s,x)} \right\|_{{L_2}(0,1)}^2} ds + b{\varepsilon _2}\int\limits_0^t {\psi '\left( s \right)\left\| {{}^PD_{0s;\psi }^{\alpha ,\theta ,\gamma ,\delta }{\vartheta _{xx}}(s,x)} \right\|_{{L_2}(0,1)}^2ds}  \\
+\varepsilon \int\limits_0^t {\psi '\left( s \right)\left\| {g\left( {s,x} \right)} \right\|_{{L_2}(0,1)}^2ds}  + \frac{a}{2}\left\| {\varphi \left( x \right)} \right\|_{{L_2}(0,1)}^2.
\end{array}
    \label{eq:40}
\end{equation}
Applying suitable substitutions for $\vartheta(t,x)$ and $g(t,x)$ in inequality~\eqref{eq:40},  
and performing the necessary calculations, we arrive at the following inequality:
$$
\begin{array}{l}
\frac{a}{2}\left\| {{}^PD_{0t;\psi }^{\alpha ,\beta ,\gamma ,\delta }u(t,x)} \right\|_{{L_2}(0,1)}^2 + \left( {1 - {b_1}} \right)\int\limits_0^t {\psi '\left( s \right)\left\| {{}^PD_{0s;\psi }^{\alpha ,\beta ,\gamma ,\delta }u(s,x)} \right\|_{{L_2}(0,1)}^2ds} \\
 - 2{\varepsilon _1}\int\limits_0^t {\psi '\left( s \right)\left\| {{u_{xx}}(s,x)} \right\|_{{L_2}(0,1)}^2\,} ds - 2b{\varepsilon _2}\int\limits_0^t {\psi '\left( s \right)\left\| {{}^PD_{0s;\psi }^{\alpha ,\theta ,\gamma ,\delta }{u_{xx}}(s,x)} \right\|_{{L_2}(0,1)}^2ds} \\
\le 2\varepsilon \int\limits_0^t {\psi '\left( s \right)\left\| {f\left( {s,x} \right)} \right\|_{{L_2}(0,1)}^2ds}  + T\left\| {\omega ''\left( x \right)} \right\|_{{L_2}(0,1)}^2 + \frac{a}{2}\left\| {\varphi \left( x \right)} \right\|_{{L_2}(0,1)}^2\,.
\end{array}
$$
Here 
$$
{T_1} = \mathop {\max }\limits_{0 < t \le T} \left( {{\varepsilon _1}2\int\limits_0^t {\psi '\left( s \right){{\left| {{{\left( {\psi \left( s \right) - \psi \left( 0 \right)} \right)}^{\beta  - 1}}E_{\alpha ,\beta }^\gamma \left( {\delta {{\left( {\psi \left( s \right) - \psi \left( 0 \right)} \right)}^\alpha }} \right)} \right|}^2}} ds} \right),
$$
$$
{T_2} = \mathop {\max }\limits_{0 < t \le T} \left( {2b{\varepsilon _2}\int\limits_0^t {\psi '\left( s \right){{\left| {\frac{{{{\left( {\psi \left( s \right) - \psi \left( 0 \right)} \right)}^{\beta  - \theta  - 1}}}}{{\Gamma \left( {\beta  - \theta } \right)}}} \right|}^2}ds} } \right),
$$
$$
{T_3} = \mathop {\max }\limits_{0 < t \le T} \left( {2\varepsilon \int\limits_0^t {\psi '\left( s \right){{\left| {{{\left( {\psi \left( s \right) - \psi \left( 0 \right)} \right)}^{\beta  - 1}}E_{\alpha ,\beta }^\gamma \left( {\delta {{\left( {\psi \left( s \right) - \psi \left( 0 \right)} \right)}^\alpha }} \right)} \right|}^2}ds} } \right),
$$
$$
{T_4} = \mathop {\max }\limits_{0 < t \le T} \left( {2\varepsilon \int\limits_0^t {\psi '\left( s \right){{\left| {\frac{{b{{\left( {\psi \left( s \right) - \psi \left( 0 \right)} \right)}^{\beta  - \theta  - 1}}}}{{\Gamma \left( {\beta  - \theta } \right)}}} \right|}^2}ds} } \right),
$$
and $T = {T_1} + {T_2} + {T_3} + {T_4}$,${b_1} = \frac{1}{{4{\varepsilon _1}}} + \frac{b}{{4{\varepsilon _2}}} + \frac{1}{{4\varepsilon }}$.

\subsection{Formal solution}

Taking advantage of the separability of variables permitted by the equation and the domain, we seek the solution in the form of a spectral expansion in the spatial variable. The associated spatial operator generates a self‑adjoint eigenvalue problem, and therefore the sought solution admits an expansion in the eigenfunctions of this operator, i.e.
\begin{eqnarray}\label{FS}
u\left( {t,x} \right) = \sum\limits_{n = 1}^{ + \infty } {{T_n}\left( t \right)\sin \pi nx} ,\,\,\,f\left( {t,x} \right) = \sum\limits_{n = 1}^{ + \infty } {{f_n}\left( t \right)\sin \pi nx} .
\end{eqnarray}
Here, ${T_n}\left( t \right)$ are the unknown functions to be determined, and  ${f_n}\left( t \right)$ are Fourier coefficients of the function $f(t,x),$ given as ${f_n}\left( t \right) = 2\int\limits_0^1 {f\left( {t,x} \right)\sin \pi nxdx}.$ 

Substituting \eqref{FS} into \eqref{eq:31} and including initial conditions, we obtain the following Cauchy-type problem:
\begin{equation}
\left\{ \begin{array}{l}
a\,{}^PD_{0t;\psi }^{\alpha ,\beta  + 1,\gamma ,\delta }{T_n}\left( t \right) + {}^PD_{0t;\psi }^{\alpha ,\beta ,\gamma ,\delta }{T_n}\left( t \right) + {(n\pi)} ^2b{}^PD_{0t;\psi }^{\alpha ,\theta ,\gamma ,\delta }{T_n}(t) + {(n\pi)} ^2{T_n}(t) = {f_n}\left( t \right),\\
\mathop {\lim }\limits_{t \to  + 0} {}^PD_{0t;\psi }^{\alpha ,\beta ,\gamma ,\delta }T(t) = {A_n}\,,\,\,\,\,\mathop {\lim }\limits_{t \to  + 0} {}^PI_{0t;\psi }^{\alpha ,1 - \beta , - \gamma ,\delta }T(t) = {B_n}\,,
\end{array} \right.
    \label{CP}
\end{equation}
where ${A_n}$ and ${B_n}$ are the Fourier coefficient of the given functions $\varphi (x)$ and $\omega (x)$, which are defined as 
$
{A_n} = 2\int\limits_0^1 {\varphi \left( x \right)\sin \pi nxdx},\,\,\, {B_n} = 2\int\limits_0^1 {\omega \left( x \right)\sin \pi nxdx}.$
 
For convenience, before solving the problem \eqref{CP}, let us prove the following lemma 

\newtheorem{lemma}{Lemma} 
\begin{lemma}
If $0<\theta <\beta <1$ and ${{\left( \psi \left( t \right)-\psi \left( 0 \right) \right)}^{1-\beta }}u\left( t,x \right)\in C\left( {{{\bar{\Omega }}}_{\left( T,1 \right)}} \right)$conditions are valid, then for the function $u\left( t,x \right)$, the following equality holds:
    \begin{equation}
        \mathop {\lim }\limits_{t \to  + 0} {}^PI_{0t;\psi }^{\alpha ,1 - \theta , - \gamma ,\delta }u(t,x) = 0\,.
    \label{equation 35}
    \end{equation}
\end{lemma}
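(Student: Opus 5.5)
The idea is to bound the $\psi$-Prabhakar integral directly by means of the regularity assumption. Set
$$
M=\max_{\bar\Omega_{(T,1)}}\left|\left(\psi(t)-\psi(0)\right)^{1-\beta}u(t,x)\right|<\infty .
$$
Then $|u(s,x)|\le M\left(\psi(s)-\psi(0)\right)^{\beta-1}$ for $0<s\le T$, $x\in[0,1]$. By definition \eqref{psiPI},
$$
{}^PI_{0t;\psi}^{\alpha,1-\theta,-\gamma,\delta}u(t,x)=\int_0^t\psi'(s)\left(\psi(t)-\psi(s)\right)^{-\theta}E^{-\gamma}_{\alpha,1-\theta}\left(\delta\left(\psi(t)-\psi(s)\right)^\alpha\right)u(s,x)\,ds .
$$

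First, the Prabhakar factor is harmless. $E^{-\gamma}_{\alpha,1-\theta}(z)$ is an entire function of $z$, since $1-\theta>0$ and $\alpha>0$ make the series converge everywhere. Its argument $\delta(\psi(t)-\psi(s))^\alpha$ ranges over the compact set $|z|\le|\delta|(\psi(T)-\psi(0))^\alpha$. Hence $|E^{-\gamma}_{\alpha,1-\theta}(\cdot)|\le K$ on the whole integration range, with $K$ independent of $t,s,x$.

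Second, estimate the remaining singular integral by the substitution $\tau=\psi(s)-\psi(0)$, which is legitimate because $\psi$ is $C^1$ and strictly increasing. Writing $\Psi(t)=\psi(t)-\psi(0)$, we get
$$
\left|{}^PI_{0t;\psi}^{\alpha,1-\theta,-\gamma,\delta}u(t,x)\right|\le KM\int_0^{\Psi(t)}(\Psi(t)-\tau)^{-\theta}\tau^{\beta-1}d\tau
=KM\,B(1-\theta,\beta)\,\Psi(t)^{\beta-\theta}.
$$
The Beta integral is finite because $1-\theta>0$ and $\beta>0$.

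Third, since $\beta-\theta>0$ and $\Psi(t)\to0$ as $t\to+0$ by continuity of $\psi$, the right-hand side tends to zero uniformly in $x\in[0,1]$. This gives \eqref{equation 35}.

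The only delicate point is the uniform bound on the Prabhakar function. This is where the hypothesis $\theta<1$ (so that the second parameter $1-\theta$ is positive) and the boundedness of $\psi$ on $[0,T]$ are used. The hypothesis $\theta<\beta$ is exactly what makes the exponent $\beta-\theta$ positive. I expect no further obstacle. One can also contrast this with $\theta=\beta$: there the same computation only yields boundedness, which is consistent with the nonzero initial datum $\omega(x)$ in the original problem.
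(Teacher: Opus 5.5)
Your proof is correct and is essentially the paper's: you bound $u$ by the sup norm of $(\psi(t)-\psi(0))^{1-\beta}u$ and reduce to a Beta integral that yields the vanishing factor $(\psi(t)-\psi(0))^{\beta-\theta}$. The one difference is in the Prabhakar factor. The paper expands $E^{-\gamma}_{\alpha,1-\theta}$ in its series and integrates term by term to get the exact value $\Gamma(\beta)(\psi(t)-\psi(0))^{\beta-\theta}E^{-\gamma}_{\alpha,\beta-\theta+1}\left(\delta(\psi(t)-\psi(0))^{\alpha}\right)$. Your uniform bound $|E^{-\gamma}_{\alpha,1-\theta}|\le K$ is simpler, and it avoids the paper's tacit assumption that the kernel is nonnegative when it keeps the absolute value outside the integral.
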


\textbf{Proof:} Let us prove equality \eqref{equation 35}. For this aim, we rewrite the left side of \eqref{equation 35} as
$$
\begin{array}{l}
\mathop {\lim }\limits_{t \to  + 0} {}^PI_{0t;\psi }^{\alpha ,1 - \theta , - \gamma ,\delta }u(t,x) = \mathop {\lim }\limits_{t \to  + 0} \int\limits_0^t {\psi '(s){{\left( {\psi (t) - \psi (s)} \right)}^{ - \theta }}E_{\alpha ,1 - \theta }^{ - \gamma }\left( {\delta {{\left( {\psi (t) - \psi (s)} \right)}^\alpha }} \right)u(s,x)ds}  = \\
\mathop {\lim }\limits_{t \to  + 0} \int\limits_0^t {\psi '(s){{\left( {\psi (t) - \psi (s)} \right)}^{ - \theta }}{{\left( {\psi (s) - \psi (0)} \right)}^{\beta  - 1}}E_{\alpha ,1 - \theta }^{ - \gamma }\left( {\delta {{\left( {\psi (t) - \psi (s)} \right)}^\alpha }} \right){{\left( {\psi (s) - \psi (0)} \right)}^{1 - \beta }}u(s,x)ds} .
\end{array}
$$
Based on the condition ${{\left( \psi \left( t \right)-\psi \left( 0 \right) \right)}^{1-\beta }}u\left( t,x \right)\in C\left( {{{\bar{\Omega }}}_{\left( T,1 \right)}} \right)$, we can conclude that the function \\${{\left( \psi \left( t \right)-\psi \left( 0 \right) \right)}^{1-\beta }}u\left( t,x \right)$ is bounded. Since that, we get 
$$
\begin{array}{l}
\mathop {\lim }\limits_{t \to  + 0} \left| {{}^PI_{0t;\psi }^{\alpha ,1 - \theta , - \gamma ,\delta }u(t,x)} \right| = \\
\mathop {\lim }\limits_{t \to  + 0} \left| {\int\limits_0^t {\psi '(s){{\left( {\psi (t) - \psi (s)} \right)}^{ - \theta }}{{\left( {\psi (s) - \psi (0)} \right)}^{\beta  - 1}}E_{\alpha ,1 - \theta }^{ - \gamma }\left( {\delta {{\left( {\psi (t) - \psi (s)} \right)}^\alpha }} \right){{\left( {\psi (s) - \psi (0)} \right)}^{1 - \beta }}u(s,x)ds} } \right| \le \\
{\left\| {{{\left( {\psi (s) - \psi (0)} \right)}^{1 - \beta }}u(s,x)} \right\|_{C\left( {{{\bar \Omega }_{\left( {T,1} \right)}}} \right)}}\mathop {\lim }\limits_{t \to  + 0} \left| {\int\limits_0^t {\psi '(s){{\left( {\psi (t) - \psi (s)} \right)}^{ - \theta }}{{\left( {\psi (s) - \psi (0)} \right)}^{\beta  - 1}}E_{\alpha ,1 - \theta }^{ - \gamma }\left( {\delta {{\left( {\psi (t) - \psi (s)} \right)}^\alpha }} \right)ds} } \right|=\\
{\left\| {{{\left( {\psi (s) - \psi (0)} \right)}^{1 - \beta }}u(s,x)} \right\|_{C\left( {{{\bar \Omega }_{\left( {T,1} \right)}}} \right)}}\mathop {\lim }\limits_{t \to  + 0} \left| {\sum\limits_{k = 0}^{ + \infty } {\frac{{{{\left( { - \gamma } \right)}_k}{\delta ^k}}}{{\Gamma \left( {\alpha k + 1 - \theta } \right)\Gamma \left( {k + 1} \right)}}} \int\limits_0^t {\psi '(s){{\left( {\psi (t) - \psi (s)} \right)}^{\alpha k - \theta }}{{\left( {\psi (s) - \psi (0)} \right)}^{\beta  - 1}}ds} } \right|.
\end{array}
$$
Next, we change the variable of the integral by this expression ${z = \left( {\psi (t) - \psi (s)} \right)/\left( {\psi (t) - \psi (0)} \right)}$ and, using well-known properties of the Beta function, we obtain
$$
 {\left\| {{{\left( {\psi (s) - \psi (0)} \right)}^{1 - \beta }}u(s,x)} \right\|_{C\left( {{{\bar \Omega }_{\left( {T,1} \right)}}} \right)}}\mathop {\lim }\limits_{t \to  + 0} \left| {\Gamma \left( \beta  \right){{\left( {\psi (t) - \psi (0)} \right)}^{\beta  - \theta }}\sum\limits_{k = 0}^{ + \infty } {\frac{{{{\left( { - \gamma } \right)}_k}{\delta ^k}{{\left( {\psi (t) - \psi (0)} \right)}^{\alpha k}}}}{{\Gamma \left( {\alpha k + \beta  - \theta  + 1} \right)\Gamma \left( {k + 1} \right)}}} } \right| = 0.
$$

Next, we solve the problem \eqref{CP}. For this aim for convenience, introduce the following notation for the eigenvalue: 
$\lambda_{n}=(\pi n)^{2}$. With this, we reduce the equation of the Cauchy-type problem \eqref{CP} to the following form:

\begin{equation}
a\,{}^PD_{0t;\psi }^{\alpha ,\beta  + 1,\gamma ,\delta }{T_n}\left( t \right) + {}^PD_{0t;\psi }^{\alpha ,\beta ,\gamma ,\delta }{T_n}\left( t \right) + b\lambda_{n} {}^PD_{0t;\psi }^{\alpha ,\theta ,\gamma ,\delta }{T_n}(t) + \lambda_{n} {T_n}(t) = {f_n}\left( t \right).
    \label{eq:48}
\end{equation}
We apply the integral operator ${}^{P}I_{0t;\psi }^{\alpha,\beta +1,\,\gamma,\delta }$ to both sides of the equation \eqref{eq:48}, using
the formulas in \cite{O21}
$$
 {}^PI_{0t;\psi }^{\alpha ,\beta ,\,0,\delta }f(t) = {}^RI_{0t;\psi }^\beta f(t),
$$
$$
  \begin{array}{l}
{}^PI_{0t;\psi }^{\alpha ,\beta ,\gamma ,\delta }\left( {{{\left( {\psi \left( t \right) - \psi \left( 0 \right)} \right)}^{\nu  - 1}}E_{\alpha ,\nu }^\sigma \left( {\delta {{\left( {\psi \left( t \right) - \psi \left( 0 \right)} \right)}^\alpha }} \right)} \right)\\
 = {\left( {\psi \left( t \right) - \psi \left( 0 \right)} \right)^{\nu  + \beta  - 1}}E_{\alpha ,\nu  + \beta }^{\sigma  + \gamma }\left( {\delta {{\left( {\psi \left( t \right) - \psi \left( 0 \right)} \right)}^\alpha }} \right)\,,
\end{array}
$$
with $\alpha,\beta,\gamma,\delta,\nu,\sigma \in\mathbb{R}$ and $\alpha  > 0,\beta > 0,\nu  > 0,$
and equalities \eqref{eq:30}, \eqref{equation 35}, after some calculations and simplifications, we obtain the following integral equation: 

\begin{equation}
{T_n}\left( t \right) = \frac{1}{a}g(t) - \frac{\lambda_{n}}{a}{}^PI_{0t;\psi }^{\alpha ,\beta  + 1,\,\gamma ,\delta }{T_n}(t) - \frac{{b\lambda_{n}}}{a}{}^RI_{0t;\psi }^{\beta  + 1 - \theta }{T_n}\left( t \right) - \frac{1}{a}{}^RI_{0t;\psi }^1{T_n}\left( t \right).
    \label{eq:49}
\end{equation}
Here,
$$
\begin{array}{l}
g(t) = {}^PI_{0t;\psi }^{\alpha ,\beta  + 1,\,\gamma ,\delta }f_{n}\left( t \right) + \left( {a{C_1} + {C_2}} \right){\left( {\psi \left( t \right) - \psi \left( 0 \right)} \right)^\beta }E_{\alpha ,\beta  + 1}^\gamma \left( {\delta {{\left( {\psi \left( t \right) - \psi \left( 0 \right)} \right)}^\alpha }} \right) \\
+\,a{C_2}{\left( {\psi \left( t \right) - \psi \left( 0 \right)} \right)^{\beta  - 1}}E_{\alpha ,\beta }^\gamma \left( {\delta {{\left( {\psi \left( t \right) - \psi \left( 0 \right)} \right)}^\alpha }} \right)
\end{array}
$$
and ${C_1} = \mathop {\lim }\limits_{t \to  + 0} {}^PD_{0t;\psi }^{\alpha ,\beta ,\gamma ,\delta }T(t),$ ${C_2} = \mathop {\lim }\limits_{t \to  + 0} {}^PI_{0t;\psi }^{\alpha ,1 - \beta , - \gamma ,\delta }T(t)$.\\
To find a solution to the integral equation \eqref{eq:49}, we use the successive approximation method. For this aim, we consider the formula in \cite{O22}
$$
^PI_{0t;\psi }^{\alpha ,\beta ,{\kern 1pt} \gamma ,\delta }\,{\,^P}I_{0t;\psi }^{\alpha ,\nu ,{\kern 1pt} \sigma ,\delta }f(t) = {\,^P}I_{0t;\psi }^{\alpha ,\nu ,{\kern 1pt} \sigma ,\delta }\,{\,^P}I_{0t;\psi }^{\alpha ,\beta ,{\kern 1pt} \gamma ,\delta }f(t){ = ^P}I_{0t;\psi }^{\alpha ,\beta  + \nu ,{\kern 1pt} \gamma  + \sigma ,\delta }f(t),
$$
where $\alpha,\beta,\gamma,\delta,\nu,\sigma\in\mathbb{R}$ with $\alpha  > 0,\beta > 0,\nu  > 0.$\\
Let us proceed as follows:
\begin{align}
{{}_0{T_n}(t)} &= \frac{1}{a}g(t),\notag \\
{{}_m{T_n}(t)}\left( t \right) &= {{}_0{T_n}(t)} - \frac{\lambda_{n}}{a}\,\,{}^PI_{0t;\psi }^{\alpha ,\beta  + 1,\,\gamma ,\delta }{{}_{m-1}{T_n}(t)} - \frac{{b\lambda_{n}}}{a}\,\,{}^RI_{0t;\psi }^{\beta  + 1 - \theta }{{}_{m-1}{T_n}(t)} - \frac{1}{a}\,\,{}^RI_{0t;\psi }^1\,\,{{}_{m-1}{T_n}(t)},\notag\\
{{}_1{T_n}(t)} &= {{}_0{T_n}(t)} - \frac{\lambda_{n}}{a}\,\,{}^PI_{0t;\psi }^{\alpha ,\beta  + 1,\,\gamma ,\delta }{{}_0{T_n}(t)} - \frac{{b\lambda_{n} }}{a}\,\,{}^PI_{0t;\psi }^{\alpha ,\beta  + 1 - \theta ,0,\delta }{{}_0{T_n}(t)} - \frac{1}{a}\,\,{}^PI_{0t;\psi }^{\alpha ,1,0,\delta }{{}_0{T_n}(t)} \notag\\
 & = {\sum\limits_{k = 0}^1 {\left( { - \frac{\lambda_{n}}{a}\,\,{}^PI_{0t;\psi }^{\alpha ,\beta  + 1,\,\gamma ,\delta } - \frac{{b\lambda_{n}}}{a}\,\,{}^PI_{0t;\psi }^{\alpha ,\beta  + 1 - \theta ,0,\delta } - \frac{1}{a}\,\,{}^PI_{0t;\psi }^{\alpha ,1,0,\delta }} \right)} ^k}{{}_0{T_n}(t)}\notag\,,
\end{align}

\begin{align}
    {}&{{}_2{T_n}(t)}\left( t \right) = {{}_0{T_n}(t)} - \frac{\lambda_{n}}{a}{}^PI_{0t;\psi }^{\alpha ,\beta  + 1,\,\gamma ,\delta }{{}_1{T_n}(t)} - \frac{{b\lambda_{n}}}{a}{}^PI_{0t;\psi }^{\alpha ,\beta  + 1 - \theta ,0,\delta }{{}_1{T_n}(t)} - \frac{1}{a}{}^PI_{0t;\psi }^{\alpha ,1,0,\delta }{{}_1{T_n}(t)}\notag\\
     &= {{}_0{T_n}(t)} - \frac{\lambda_{n}}{a}{}^PI_{0t;\psi }^{\alpha ,\beta  + 1,\,\gamma ,\delta }\left[ {{{}_0{T_n}(t)} - \frac{\lambda_{n}}{a}{}^PI_{0t;\psi }^{\alpha ,\beta  + 1,\,\gamma ,\delta }{{}_0{T_n}(t)} - \frac{{b\lambda_{n}}}{a}{}^PI_{0t;\psi }^{\alpha ,\beta  + 1 - \theta ,0,\delta }{{}_0{T_n}(t)} - \frac{1}{a}{}^PI_{0t;\psi }^{\alpha ,1,0,\delta }{{}_0{T_n}(t)}} \right]\notag\\
     &- \frac{{b\lambda_{n}}}{a}{}^PI_{0t;\psi }^{\alpha ,\beta  + 1 - \theta ,0,\delta }\left[ {{{}_0{T_n}(t)} - \frac{\lambda_{n}}{a}{}^PI_{0t;\psi }^{\alpha ,\beta  + 1,\,\gamma ,\delta }{{}_0{T_n}(t)} - \frac{{b\lambda_{n}}}{a}{}^PI_{0t;\psi }^{\alpha ,\beta  + 1 - \theta ,0,\delta }{{}_0{T_n}(t)} - \frac{1}{a}{}^PI_{0t;\psi }^{\alpha ,1,0,\delta }{{}_0{T_n}(t)}} \right]\notag \\
      &- \frac{1}{a}{}^PI_{0t;\psi }^{\alpha ,1,0,\delta }\left[ {{{}_0{T_n}(t)} - \frac{\lambda_{n}}{a}{}^PI_{0t;\psi }^{\alpha ,\beta  + 1,\,\gamma ,\delta }{{}_0{T_n}(t)} - \frac{{b\lambda_{n}}}{a}{}^PI_{0t;\psi }^{\alpha ,\beta  + 1 - \theta ,0,\delta }{{}_0{T_n}(t)} - \frac{1}{a}{}^PI_{0t;\psi }^{\alpha ,1,0,\delta }{{}_0{T_n}(t)}} \right]\notag \\
      &= {{}_0{T_n}(t)} - \frac{\lambda_{n}}{a}{}^PI_{0t;\psi }^{\alpha ,\beta  + 1,\,\gamma ,\delta }{{}_0{T_n}(t)} + \frac{{{\lambda_{n}^{2}}}}{{{a^2}}}{}^PI_{0t;\psi }^{\alpha ,2\beta  + 2,\,2\gamma ,\delta }{{}_0{T_n}(t)} + \frac{{b{\lambda_{n}^2}}}{{{a^2}}}{}^PI_{0t;\psi }^{\alpha ,2\beta  + 2 - \theta ,\gamma ,\delta }{{}_0{T_n}(t)} \notag\\
      &+ \frac{\lambda_{n}}{{{a^2}}}{}^PI_{0t;\psi }^{\alpha ,\beta  + 2,\gamma ,\delta }{{}_0{T_n}(t)} - \frac{{b\lambda_{n}}}{a}{}^PI_{0t;\psi }^{\alpha ,\beta  + 1 - \theta ,0,\delta }{{}_0{T_n}(t)} + \frac{{b{\lambda_{n}^2}}}{{{a^2}}}{}^PI_{0t;\psi }^{\alpha ,2\beta  + 2 - \theta ,\gamma ,\delta }{{}_0{T_n}(t)} + \frac{{{b^2}{\lambda_{n}^2}}}{{{a^2}}}{}^PI_{0t;\psi }^{\alpha ,2\beta  + 2 - 2\theta ,0,\delta }{{}_0{T_n}(t)}\notag\\
      &+ \frac{{b\lambda_{n}}}{{{a^2}}}{}^PI_{0t;\psi }^{\alpha ,\beta  + 2 - \theta ,0,\delta }{{}_0{T_n}(t)} - \frac{1}{a}{}^PI_{0t;\psi }^{\alpha ,1,0,\delta }{{}_0{T_n}(t)} + \frac{\lambda_{n}}{{{a^2}}}{}^PI_{0t;\psi }^{\alpha ,\beta  + 2,\gamma ,\delta }{{}_0{T_n}(t)} + \frac{{b\lambda_{n}}}{{{a^2}}}{}^PI_{0t;\psi }^{\alpha ,\beta  + 2 - \theta ,0,\delta }{{}_0{T_n}(t)} \notag \\ 
      &+ \frac{1}{{{a^2}}}{}^PI_{0t;\psi }^{\alpha ,2,0,\delta }{{}_0{T_n}(t)}
       = {{}_0{T_n}(t)} + \left( { - \frac{\lambda_{n}}{a}{}^PI_{0t;\psi }^{\alpha ,\beta  + 1,\,\gamma ,\delta }{{}_0{T_n}(t)} - \frac{{b\lambda_{n}}}{a}{}^PI_{0t;\psi }^{\alpha ,\beta  + 1 - \theta ,0,\delta }{{}_0{T_n}(t)} - \frac{1}{a}{}^PI_{0t;\psi }^{\alpha ,1,0,\delta }{{}_0{T_n}(t)}} \right)\notag \\
       &+ \left( {\frac{{{\lambda_{n}^2}}}{{{a^2}}}{}^PI_{0t;\psi }^{\alpha ,2\beta  + 2,\,2\gamma ,\delta }{{}_0{T_n}(t)} + \frac{{{b^2}{\lambda_{n}^2}}}{{{a^2}}}{}^PI_{0t;\psi }^{\alpha ,2\beta  + 2 - 2\theta ,0,\delta }{{}_0{T_n}(t)} + \frac{1}{{{a^2}}}{}^PI_{0t;\psi }^{\alpha ,2,0,\delta }{{}_0{T_n}(t)}} \right.\notag \\
        &+ \left. {2\frac{{b{\lambda_{n}^2}}}{{{a^2}}}{}^PI_{0t;\psi }^{\alpha ,2\beta  + 2 - \theta ,\gamma ,\delta }{{}_0{T_n}(t)} + 2\frac{\lambda_{n}}{{{a^2}}}{}^PI_{0t;\psi }^{\alpha ,\beta  + 2,\gamma ,\delta }{{}_0{T_n}(t)} + 2\frac{{b\lambda_{n}}}{{{a^2}}}{}^PI_{0t;\psi }^{\alpha ,\beta  + 2 - \theta ,0,\delta }{{}_0{T_n}(t)}} \right)\notag \\
         &= {\sum\limits_{k = 0}^2 {\left( { - \frac{\lambda_{n}}{a}{}^PI_{0t;\psi }^{\alpha ,\beta  + 1,\,\gamma ,\delta } - \frac{{b\lambda_{n}}}{a}{}^PI_{0t;\psi }^{\alpha ,\beta  + 1 - \theta ,0,\delta } - \frac{1}{a}{}^PI_{0t;\psi }^{\alpha ,1,0,\delta }} \right)} ^k}{{}_0{T_n}(t)}\,.\notag
\end{align}
Similarly, if we continue this process $m$ times, then we get
\begin{equation}
{{}_m{T_n}(t)} = {\sum\limits_{k = 0}^m {\left( { - \frac{\lambda_{n}}{a}{}^PI_{0t;\psi }^{\alpha ,\beta  + 1,\,\gamma ,\delta } - \frac{{b\lambda_{n}}}{a}{}^PI_{0t;\psi }^{\alpha ,\beta  + 1 - \theta ,0,\delta } - \frac{1}{a}{}^PI_{0t;\psi }^{\alpha ,1,0,\delta }} \right)} ^k}{{}_0{T_n}(t)}\,.
    \label{eq:50}
\end{equation}
If we apply the formula in \cite{Aigner97} 
$$
{\left( {a + b + c} \right)^n} = \sum\limits_{k = 0}^n {\sum\limits_{j = 0}^k {\frac{{\Gamma \left( {n + 1} \right)}}{{\Gamma \left( {n - k + 1} \right)\Gamma \left( {k - j + 1} \right)\Gamma \left( {j + 1} \right)}}{a^{n - k}}{b^{k - j}}{c^j}} } 
$$
to \eqref{eq:50}, then we obtain
\begin{align}
    {{}_m{T_n}(t)} &= \sum\limits_{k = 0}^m {\sum\limits_{i = 0}^k {\sum\limits_{j = 0}^i {\frac{{\Gamma \left( {k + 1} \right){{\left( { - \frac{\lambda_{n}}{a}} \right)}^{k - i}}{{\left( { - \frac{{b\lambda_{n}}}{a}} \right)}^{i - j}}{{\left( { - \frac{1}{a}} \right)}^j}}}{{\Gamma \left( {k - i + 1} \right)\Gamma \left( {i - j + 1} \right)\Gamma \left( {j + 1} \right)}}} } } \notag \\
    &\times \,\,{}^PI_{0t;\psi }^{\alpha ,\left( {\beta  + 1} \right)\left( {k - i} \right),\,\gamma \left( {k - i} \right),\delta }{}^PI_{0t;\psi }^{\alpha ,\left( {\beta  + 1 - \theta } \right)\left( {i - j} \right),0,\delta }{}^PI_{0t;\psi }^{\alpha ,j,0,\delta }{{{}_0{T_n}(t)}}\left( t \right)\,.\notag
\end{align}
If we calculate limit of the ${{}_m{T_n}(t)}$ when $m\to +\infty $ we obtain the  ${{T}_{n}}\left( t \right)$ as follows:
\begin{align}
    T_{n}\left( t \right) &= \sum\limits_{k = 0}^{ + \infty } {\sum\limits_{i = 0}^k {\sum\limits_{j = 0}^i {\frac{{\Gamma \left( {k + 1} \right){{\left( { - \frac{\lambda_{n}}{a}} \right)}^{k - i}}{{\left( { - \frac{{b\lambda_{n}}}{a}} \right)}^{i - j}}{{\left( { - \frac{1}{a}} \right)}^j}}}{{\Gamma \left( {k - i + 1} \right)\Gamma \left( {i - j + 1} \right)\Gamma \left( {j + 1} \right)}}} } } \,\notag \\
    &\times \,\,{}^PI_{0t;\psi }^{\alpha ,\left( {\beta  + 1} \right)\left( {k - i} \right),\,\gamma \left( {k - i} \right),\delta }{}^PI_{0t;\psi }^{\alpha ,\left( {\beta  + 1 - \theta } \right)\left( {i - j} \right),0,\delta }{}^PI_{0t;\psi }^{\alpha ,j,0,\delta }{{{}_0{T_n}(t)}}\left( t \right). \label{eq:53}
\end{align}
We know from \cite{Dummit2004}that the following formula holds:
\begin{equation}
 \sum\limits_{r = 0}^{ + \infty } {\sum\limits_{h = 0}^r {{a_h}{b_{r - h}}} } =\sum\limits_{n = 0}^{ + \infty }  \sum\limits_{m = 0}^{ + \infty } {{a_n}}{{b_m}} .
    \label{eq:54}
\end{equation}
Here, used the following remarks $n=h,r=m+n$. Using two-time, the formula \eqref{eq:54}, we obtain the following result:
\begin{equation}
\sum\limits_{n = 0}^{ + \infty } {\sum\limits_{m = 0}^n {\sum\limits_{k = 0}^m {{a_k}{b_{m - k}}{c_{n - m}}} } }=\sum\limits_{p = 0}^{ + \infty }  \sum\limits_{q = 0}^{ + \infty }  \sum\limits_{z = 0}^{ + \infty }{{a_p}}{{b_q}} {{c_z}}  \,.
    \label{eq:55}
\end{equation}
Here, $n - m = p,\,\,k = q,\,n - p - q = z\,.$ If we apply the formula \eqref{eq:55} to \eqref{eq:53}, we have
$$
T_{n}\left( t \right) = \sum\limits_{k = 0}^{ + \infty } {\sum\limits_{i = 0}^{ + \infty } {\sum\limits_{j = 0}^{ + \infty } {\frac{{\Gamma \left( {k + i + j + 1} \right){{\left( { - \frac{\lambda_{n}}{a}} \right)}^k}{{\left( { - \frac{{b\lambda_{n}}}{a}} \right)}^i}{{\left( { - \frac{1}{a}} \right)}^j}}}{{\Gamma \left( {k + 1} \right)\Gamma \left( {i + 1} \right)\Gamma \left( {j + 1} \right)}}} } } \,{}^PI_{0t;\psi }^{\alpha ,\left( {\beta  + 1} \right)k,\,\gamma k,\delta }{}^PI_{0t;\psi }^{\alpha ,\left( {\beta  + 1 - \theta } \right)i,0,\delta }{}^PI_{0t;\psi }^{\alpha ,j,0,\delta }{{{}_0{T_n}(t)}}\left( t \right)\,.
$$
Since that,
\begin{equation}
T_{n}\left( t \right) = \sum\limits_{k = 0}^{ + \infty } {\sum\limits_{i = 0}^{ + \infty } {\sum\limits_{j = 0}^{ + \infty } {\frac{{\Gamma \left( {k + i + j + 1} \right){{\left( { - \frac{\lambda_{n}}{a}} \right)}^k}{{\left( { - \frac{{b\lambda_{n}}}{a}} \right)}^i}{{\left( { - \frac{1}{a}} \right)}^j}\left( {\frac{1}{a}} \right)}}{{\Gamma \left( {k + 1} \right)\Gamma \left( {i + 1} \right)\Gamma \left( {j + 1} \right)}}{}^PI_{0t;\psi }^{\alpha ,\left( {\beta  + 1} \right)k + \left( {\beta  + 1 - \theta } \right)i + j,\,\gamma k,\delta }} } } g\left( t \right)\,.
    \label{eq:56}
\end{equation}
We replace the function $g(t)$in expression \eqref{eq:56} with an appropriate term, and then, after some simplifications, we obtain:
\begin{align}
    {T_n}\left( t \right) &= {C_{2}} {\left( {\psi \left( t \right) - \psi \left( 0 \right)} \right)^{\beta  - 1}}\sum\limits_{i = 0}^{ + \infty } {\sum\limits_{j = 0}^{ + \infty } {\sum\limits_{k = 0}^{ + \infty } {\sum\limits_{m = 0}^{ + \infty } {\frac{{\Gamma \left( {i + j + k + 1} \right){{\left( {\gamma k + \gamma } \right)}_m}}}{{\Gamma \left( {\left( {\beta  + 1 - \theta } \right)i + j + \left( {\beta  + 1} \right)k + \alpha m + \beta } \right)}}\,\,} } } } \notag \\
    &\times \frac{{{{\left( { - \frac{{b\lambda_{n}}}{a}{{\left( {\psi \left( t \right) - \psi \left( 0 \right)} \right)}^{\beta  + 1 - \theta }}} \right)}^i}}}{{\Gamma \left( {i + 1} \right)}}\frac{{{{\left( { - \frac{1}{a}\left( {\psi \left( t \right) - \psi \left( 0 \right)} \right)} \right)}^j}}}{{\Gamma \left( {j + 1} \right)}}\frac{{{{\left( { - \frac{\lambda_{n}}{a}{{\left( {\psi \left( t \right) - \psi \left( 0 \right)} \right)}^{\beta  + 1}}} \right)}^k}}}{{\Gamma \left( {k + 1} \right)}}\frac{{{{\left( {\delta {{\left( {\psi \left( t \right) - \psi \left( 0 \right)} \right)}^\alpha }} \right)}^m}}}{{\Gamma \left( {m + 1} \right)}}\notag\\
    &+ \frac{{\left( {a{C_{1}} + {C_{2}}} \right)}}{a}{\left( {\psi \left( t \right) - \psi \left( 0 \right)} \right)^\beta }\sum\limits_{i = 0}^{ + \infty } {\sum\limits_{j = 0}^{ + \infty } {\sum\limits_{k = 0}^{ + \infty } {\sum\limits_{m = 0}^{ + \infty } {\frac{{\Gamma \left( {i + j + k + 1} \right){{\left( {\gamma k + \gamma } \right)}_m}}}{{\Gamma \left( {\left( {\beta  + 1 - \theta } \right)i + j + \left( {\beta  + 1} \right)k + \alpha m + \beta  + 1} \right)}}\,\,} } } } \notag \\
    &\times \frac{{{{\left( { - \frac{{b\lambda_{n}}}{a}{{\left( {\psi \left( t \right) - \psi \left( 0 \right)} \right)}^{\beta  + 1 - \theta }}} \right)}^i}}}{{\Gamma \left( {i + 1} \right)}}\frac{{{{\left( { - \frac{1}{a}\left( {\psi \left( t \right) - \psi \left( 0 \right)} \right)} \right)}^j}}}{{\Gamma \left( {j + 1} \right)}}\frac{{{{\left( { - \frac{\lambda_{n}}{a}{{\left( {\psi \left( t \right) - \psi \left( 0 \right)} \right)}^{\beta  + 1}}} \right)}^k}}}{{\Gamma \left( {k + 1} \right)}}\frac{{{{\left( {\delta {{\left( {\psi \left( t \right) - \psi \left( 0 \right)} \right)}^\alpha }} \right)}^m}}}{{\Gamma \left( {m + 1} \right)}}\notag\\
    &+ \frac{1}{a}\int\limits_0^t {{{\left( {\psi \left( t \right) - \psi \left( s \right)} \right)}^\beta }\sum\limits_{i = 0}^{ + \infty } {\sum\limits_{j = 0}^{ + \infty } {\sum\limits_{k = 0}^{ + \infty } {\sum\limits_{m = 0}^{ + \infty } {\frac{{\Gamma \left( {i + j + k + 1} \right){{\left( {\gamma k + \gamma } \right)}_m}\,\,{{\left( { - \frac{{b\lambda_{n}}}{a}{{\left( {\psi \left( t \right) - \psi \left( s \right)} \right)}^{\beta  + 1 - \theta }}} \right)}^i}}}{{\Gamma \left( {\left( {\beta  + 1 - \theta } \right)i + j + \left( {\beta  + 1} \right)k + \alpha m + \beta  + 1} \right)\Gamma \left( {i + 1} \right)}}\,\,} } } } } \notag\\
    &\times \frac{{{{\left( { - \frac{1}{a}\left( {\psi \left( t \right) - \psi \left( s \right)} \right)} \right)}^j}}}{{\Gamma \left( {j + 1} \right)}}\frac{{{{\left( { - \frac{\lambda_{n}}{a}{{\left( {\psi \left( t \right) - \psi \left( s \right)} \right)}^{\beta  + 1}}} \right)}^k}}}{{\Gamma \left( {k + 1} \right)}}\frac{{{{\left( {\delta {{\left( {\psi \left( t \right) - \psi \left( s \right)} \right)}^\alpha }} \right)}^m}}}{{\Gamma \left( {m + 1} \right)}}\psi '\left( s \right)f_{n}\left( s \right)ds\,.\notag
\end{align}
Considering the initial conditions, we find the solution ${{T}_{n}}\left( t \right)$ of the problem \eqref{CP}  as follows: 

\begin{align*}
    &{T_n}\left( t \right) = {B_n}{\left( {\psi \left( t \right) - \psi \left( 0 \right)} \right)^{\beta  - 1}} \times \notag\\
    &{}_4{E_1}\left( {\begin{array}{l}
1,1,1,1;\gamma ,1,\gamma;\\
\beta  + 1 - \theta ,1,\beta  + 1,\alpha ,\beta ;\gamma ,\gamma ;1,1;1,1;1,1;1,1;
\end{array}}
 \mathrel{\left | {\vphantom {\begin{array}{l}
1,1,1,1;\gamma ,1,\gamma;\\
\beta  + 1 - \theta ,1,\beta  + 1,\alpha ,\beta ;\gamma ,\gamma ;1,1;1,1;1,1;1,1;
\end{array} \begin{array}{l}
\left( { - b\lambda_n \frac{{{{\left( {\psi \left( t \right) - \psi \left( 0 \right)} \right)}^{\beta  + 1 - \theta }}}}{a}} \right);\left( { - \frac{{\left( {\psi \left( t \right) - \psi \left( 0 \right)} \right)}}{a}} \right)\\
\left( { - \lambda_n \frac{{{{\left( {\psi \left( t \right) - \psi \left( 0 \right)} \right)}^{\beta  + 1}}}}{a}} \right);\left( {\delta {{\left( {\psi \left( t \right) - \psi \left( 0 \right)} \right)}^\alpha }} \right)
\end{array}}}
 \right. \kern-\nulldelimiterspace}
 {\begin{array}{l}
\left( { - b\lambda_n \frac{{{{\left( {\psi \left( t \right) - \psi \left( 0 \right)} \right)}^{\beta  + 1 - \theta }}}}{a}} \right);\left( { - \frac{{\left( {\psi \left( t \right) - \psi \left( 0 \right)} \right)}}{a}} \right)\\
\left( { - \lambda_n \frac{{{{\left( {\psi \left( t \right) - \psi \left( 0 \right)} \right)}^{\beta  + 1}}}}{a}} \right);\left( {\delta {{\left( {\psi \left( t \right) - \psi \left( 0 \right)} \right)}^\alpha }} \right)
\end{array}} \right)+ \notag \\
& \frac{{\left( {a{A_n} + {B_n}} \right)}}{a}{\left( {\psi \left( t \right) - \psi \left( 0 \right)} \right)^\beta } \times \notag \\
&{}_4{E_1}\left( {\begin{array}{l}
1,1,1,1;\gamma ,1,\gamma;\\
\beta  + 1 - \theta ,1,\beta  + 1,\alpha ,\beta  + 1;\gamma ,\gamma ;1,1;1,1;1,1;1,1;
\end{array}}
 \mathrel{\left | {\vphantom {\begin{array}{l}
1,1,1,1;\gamma ,1,\gamma;\\
\beta  + 1 - \theta ,1,\beta  + 1,\alpha ,\beta  + 1;\gamma ,\gamma ;1,1;1,1;1,1;1,1;
\end{array} \begin{array}{l}
\left( { - b\lambda_n \frac{{{{\left( {\psi \left( t \right) - \psi \left( 0 \right)} \right)}^{\beta  + 1 - \theta }}}}{a}} \right);\left( { - \frac{{\left( {\psi \left( t \right) - \psi \left( 0 \right)} \right)}}{a}} \right)\\
\left( { - \lambda_n \frac{{{{\left( {\psi \left( t \right) - \psi \left( 0 \right)} \right)}^{\beta  + 1}}}}{a}} \right);\left( {\delta {{\left( {\psi \left( t \right) - \psi \left( 0 \right)} \right)}^\alpha }} \right)
\end{array}}}
 \right. \kern-\nulldelimiterspace}
 {\begin{array}{l}
\left( { - b\lambda_n \frac{{{{\left( {\psi \left( t \right) - \psi \left( 0 \right)} \right)}^{\beta  + 1 - \theta }}}}{a}} \right);\left( { - \frac{{\left( {\psi \left( t \right) - \psi \left( 0 \right)} \right)}}{a}} \right)\\
\left( { - \lambda_n \frac{{{{\left( {\psi \left( t \right) - \psi \left( 0 \right)} \right)}^{\beta  + 1}}}}{a}} \right);\left( {\delta {{\left( {\psi \left( t \right) - \psi \left( 0 \right)} \right)}^\alpha }} \right)
\end{array}} \right)+ \notag \\
& \frac{1}{a}\int\limits_0^t {{{\left( {\psi \left( t \right) - \psi \left( s \right)} \right)}^\beta }}  \times \notag \\
&{}_4{E_1}\left( {\begin{array}{l}
1,1,1,1;\gamma ,1,\gamma;\\
\beta  + 1 - \theta ,1,\beta  + 1,\alpha ,\beta  + 1;\gamma ,\gamma ;1,1;1,1;1,1;1,1;
\end{array}}
 \mathrel{\left | {\vphantom {\begin{array}{l}
1,1,1,1;\gamma ,1,\gamma;\\
\beta  + 1 - \theta ,1,\beta  + 1,\alpha ,\beta  + 1;\gamma ,\gamma ;1,1;1,1;1,1;1,1;
\end{array} \begin{array}{l}
\left( { - b\lambda_n \frac{{{{\left( {\psi \left( t \right) - \psi \left( s \right)} \right)}^{\beta  + 1 - \theta }}}}{a}} \right);\left( { - \frac{{\left( {\psi \left( t \right) - \psi \left( s \right)} \right)}}{a}} \right)\\
\left( { - \lambda_n \frac{{{{\left( {\psi \left( t \right) - \psi \left( s \right)} \right)}^{\beta  + 1}}}}{a}} \right);\left( {\delta {{\left( {\psi \left( t \right) - \psi \left( s \right)} \right)}^\alpha }} \right)
\end{array}}}
 \right. \kern-\nulldelimiterspace}
 {\begin{array}{l}
\left( { - b\lambda_n \frac{{{{\left( {\psi \left( t \right) - \psi \left( s \right)} \right)}^{\beta  + 1 - \theta }}}}{a}} \right);\left( { - \frac{{\left( {\psi \left( t \right) - \psi \left( s \right)} \right)}}{a}} \right)\\
\left( { - \lambda_n \frac{{{{\left( {\psi \left( t \right) - \psi \left( s \right)} \right)}^{\beta  + 1}}}}{a}} \right);\left( {\delta {{\left( {\psi \left( t \right) - \psi \left( s \right)} \right)}^\alpha }} \right)
\end{array}} \right) \times\notag \\
& \,\,\psi '\left( s \right)f_n\left( s \right)ds,
\end{align*}
where 
\begin{align}
&{}_4E_1\left(
\begin{array}{l}
\alpha_1,\beta_1,\gamma_1,\delta_1;\,\gamma_2,\theta_1,\delta_2; \\
\alpha_2,\beta_2,\gamma_3,\theta_2,\delta_3;\,\gamma_4,\delta_4;\,\alpha_3,\delta_5;\,\beta_3,\delta_6;\,\gamma_5,\delta_7;\,\theta_3,\delta_8;
\end{array}
\mathrel{\left|\vphantom{
\begin{array}{l}
\alpha_1,\beta_1,\gamma_1,\delta_1;\,\gamma_2,\theta_1,\delta_2; \\
\alpha_2,\beta_2,\gamma_3,\theta_2,\delta_3;\,\gamma_4,\delta_4;\,\alpha_3,\delta_5;\,\beta_3,\delta_6;\,\gamma_5,\delta_7;\,\theta_3,\delta_8;
\end{array}
\begin{array}{l}
x;y\\
z;t
\end{array}}\right.}
\begin{array}{l}
x;y\\
z;t
\end{array}
\right)= \notag\\
& \sum_{i=0}^{+\infty} \sum_{j=0}^{+\infty} \sum_{k=0}^{+\infty} \sum_{m=0}^{+\infty}
\frac{
\Gamma\left(\alpha_1 i + \beta_1 j + \gamma_1 k + \delta_1\right)
\Gamma\left(\gamma_2 k + \theta_1 m + \delta_2\right)
}{
\Gamma\left(\alpha_2 i + \beta_2 j + \gamma_3 k + \theta_2 m + \delta_3\right)
\Gamma\left(\gamma_4 k + \delta_4\right)
}\times \notag \\
& \,
\frac{x^i}{\Gamma\left(\alpha_3 i + \delta_5\right)}
\frac{y^j}{\Gamma\left(\beta_3 j + \delta_6\right)}
\frac{z^k}{\Gamma\left(\gamma_5 k + \delta_7\right)}
\frac{t^m}{\Gamma\left(\theta_3 m + \delta_8\right)}
\label{SF}
\end{align}
is a quadrivariate Mittag-Leffler-type function introduced and studied in \cite{KKh26}.

Consequently, including the equality $\lambda_{n}=(\pi n)^{2}$, the solution $u(t,x)$ of the main problem can be represented as follows:
\begin{align}
    &u(t,x) = \sum\limits_{n = 1}^{ + \infty } {\left[ {{B_n}{{\left( {\psi \left( t \right) - \psi \left( 0 \right)} \right)}^{\beta  - 1}} \times } \right.}\notag\\
&{}_4{E_1}\left( {\begin{array}{l}
1,1,1,1;\gamma ,1,\gamma;\\
\beta  + 1 - \theta ,1,\beta  + 1,\alpha ,\beta ;\gamma ,\gamma ;1,1;1,1;1,1;1,1;
\end{array}}
 \mathrel{\left | {\vphantom {\begin{array}{l}
1,1,1,1;\gamma ,1,\gamma;\\
\beta  + 1 - \theta ,1,\beta  + 1,\alpha ,\beta ;\gamma ,\gamma ;1,1;1,1;1,1;1,1;
\end{array} \begin{array}{l}
\left( { - b{{\left( {\pi n} \right)}^2}\frac{{{{\left( {\psi \left( t \right) - \psi \left( 0 \right)} \right)}^{\beta  + 1 - \theta }}}}{a}} \right);\left( { - \frac{{\left( {\psi \left( t \right) - \psi \left( 0 \right)} \right)}}{a}} \right)\\
\left( { - {{\left( {\pi n} \right)}^2}\frac{{{{\left( {\psi \left( t \right) - \psi \left( 0 \right)} \right)}^{\beta  + 1}}}}{a}} \right);\left( {\delta {{\left( {\psi \left( t \right) - \psi \left( 0 \right)} \right)}^\alpha }} \right)
\end{array}}}
 \right. \kern-\nulldelimiterspace}
 {\begin{array}{l}
\left( { - b{{\left( {\pi n} \right)}^2}\frac{{{{\left( {\psi \left( t \right) - \psi \left( 0 \right)} \right)}^{\beta  + 1 - \theta }}}}{a}} \right);\left( { - \frac{{\left( {\psi \left( t \right) - \psi \left( 0 \right)} \right)}}{a}} \right)\\
\left( { - {{\left( {\pi n} \right)}^2}\frac{{{{\left( {\psi \left( t \right) - \psi \left( 0 \right)} \right)}^{\beta  + 1}}}}{a}} \right);\left( {\delta {{\left( {\psi \left( t \right) - \psi \left( 0 \right)} \right)}^\alpha }} \right)
\end{array}} \right) \notag \\
&+ \frac{{\left( {a{A_n} + {B_n}} \right)}}{a}{\left( {\psi \left( t \right) - \psi \left( 0 \right)} \right)^\beta } \times \notag \\
&{}_4{E_1}\left( {\begin{array}{l}
1,1,1,1;\gamma ,1,\gamma;\\
\beta  + 1 - \theta ,1,\beta  + 1,\alpha ,\beta  + 1;\gamma ,\gamma ;1,1;1,1;1,1;1,1;
\end{array}}
 \mathrel{\left | {\vphantom {\begin{array}{l}
1,1,1,1;\gamma ,1,\gamma;\\
\beta  + 1 - \theta ,1,\beta  + 1,\alpha ,\beta  + 1;\gamma ,\gamma ;1,1;1,1;1,1;1,1;
\end{array} \begin{array}{l}
\left( { - b{{\left( {\pi n} \right)}^2}\frac{{{{\left( {\psi \left( t \right) - \psi \left( 0 \right)} \right)}^{\beta  + 1 - \theta }}}}{a}} \right);\left( { - \frac{{\left( {\psi \left( t \right) - \psi \left( 0 \right)} \right)}}{a}} \right)\\
\left( { - {{\left( {\pi n} \right)}^2}\frac{{{{\left( {\psi \left( t \right) - \psi \left( 0 \right)} \right)}^{\beta  + 1}}}}{a}} \right);\left( {\delta {{\left( {\psi \left( t \right) - \psi \left( 0 \right)} \right)}^\alpha }} \right)
\end{array}}}
 \right. \kern-\nulldelimiterspace}
 {\begin{array}{l}
\left( { - b{{\left( {\pi n} \right)}^2}\frac{{{{\left( {\psi \left( t \right) - \psi \left( 0 \right)} \right)}^{\beta  + 1 - \theta }}}}{a}} \right);\left( { - \frac{{\left( {\psi \left( t \right) - \psi \left( 0 \right)} \right)}}{a}} \right)\\
\left( { - {{\left( {\pi n} \right)}^2}\frac{{{{\left( {\psi \left( t \right) - \psi \left( 0 \right)} \right)}^{\beta  + 1}}}}{a}} \right);\left( {\delta {{\left( {\psi \left( t \right) - \psi \left( 0 \right)} \right)}^\alpha }} \right)
\end{array}} \right) \notag \\
&+ \frac{1}{a}\int\limits_0^t {{{\left( {\psi \left( t \right) - \psi \left( s \right)} \right)}^\beta } \times } \notag \\
&{}_4{E_1}\left( {\begin{array}{l}
1,1,1,1;\gamma ,1,\gamma;\\
\beta  + 1 - \theta ,1,\beta  + 1,\alpha ,\beta  + 1;\gamma ,\gamma ;1,1;1,1;1,1;1,1;
\end{array}}
 \mathrel{\left | {\vphantom {\begin{array}{l}
1,1,1,1;\gamma ,1,\gamma;\\
\beta  + 1 - \theta ,1,\beta  + 1,\alpha ,\beta  + 1;\gamma ,\gamma ;1,1;1,1;1,1;1,1;
\end{array} \begin{array}{l}
\left( { - b{{\left( {\pi n} \right)}^2}\frac{{{{\left( {\psi \left( t \right) - \psi \left( s \right)} \right)}^{\beta  + 1 - \theta }}}}{a}} \right);\left( { - \frac{{\left( {\psi \left( t \right) - \psi \left( s \right)} \right)}}{a}} \right)\\
\left( { - {{\left( {\pi n} \right)}^2}\frac{{{{\left( {\psi \left( t \right) - \psi \left( s \right)} \right)}^{\beta  + 1}}}}{a}} \right);\left( {\delta {{\left( {\psi \left( t \right) - \psi \left( s \right)} \right)}^\alpha }} \right)
\end{array}}}
 \right. \kern-\nulldelimiterspace}
 {\begin{array}{l}
\left( { - b{{\left( {\pi n} \right)}^2}\frac{{{{\left( {\psi \left( t \right) - \psi \left( s \right)} \right)}^{\beta  + 1 - \theta }}}}{a}} \right);\left( { - \frac{{\left( {\psi \left( t \right) - \psi \left( s \right)} \right)}}{a}} \right)\\
\left( { - {{\left( {\pi n} \right)}^2}\frac{{{{\left( {\psi \left( t \right) - \psi \left( s \right)} \right)}^{\beta  + 1}}}}{a}} \right);\left( {\delta {{\left( {\psi \left( t \right) - \psi \left( s \right)} \right)}^\alpha }} \right)
\end{array}} \right) \notag \\
&\left. {\psi '\left( s \right){f_n}\left( s \right)ds\,} \right]\sin \pi nx\,.\label{eq:57}
\end{align}

\subsection{Convergence of infinite series and illustrating example}
The next statement is related with the uniform convergence of infinite series given by \eqref{FS} and the other infinite series corresponding to the partial derivatives of $u(t,x)$ involved in the main equation.

\begin{theorem}
    If \,\,$0 < \beta  - \theta  + \frac{{2{\varepsilon _1}}}{\pi } < 2,\,0 < \beta  + 1 - \gamma  + \frac{{2{\varepsilon _2}}}{\pi } < 2,\,0 < 1 - \alpha  + \frac{{2{\varepsilon _3}}}{\pi } < 2,\,0 < 1 - \beta  + \frac{{2{\varepsilon _4}}}{\pi } < 2,\,\\0 < \gamma  < 1\,,$ ${\varphi ^{\left( i \right)}}\left( 0 \right) = {\varphi ^{\left( i \right)}}\left( 1 \right)=0,\,\omega^{(i)}(0)=\omega^{(i)}(1)=0$,$(i=0,2,4,6,8)$, $\varphi (x),\omega(x) \in C^{8}[0,1],$ $\varphi^{(9)}(x), \omega^{(9)}(x) \in L_{2}(0,1),$
    $\frac{{{\partial ^k}f\left( {t,0} \right)}}{{\partial {x^k}}} = \frac{{{\partial ^k}f\left( {t,1} \right)}}{{\partial {x^k}}} = 0,\,(k=0,2,4,6,8),$ $f\left( {t,x} \right) \in C_x^9\left( {{{\bar \Omega }_{\,\left( {T,1} \right)}}} \right),$  $f(\cdot,x) \in C^{1}(0,T),$ $\delta<0$, and $\psi \left( t \right)\in C^{1}\,[0,T],\,$ an increasing function that ${\psi }'\left( t \right)>0$ for all $t\in [0.T],$ then there exist a  solution $u(t,x)$ of the Problem that represented by \eqref{FS} and it has the following estimates: 
    \begin{equation*}
\left| {u(t,x)} \right| < \frac{{{C_1}}}{6} + \frac{{{C_2}}}{2}\left\| {{\omega ^{\left( 3 \right)}}\left( x \right)} \right\|_{{L_2}\left( {0,1} \right)}^2 + \frac{{{C_3}}}{2}\left\| {{\varphi ^{\left( 3 \right)}}\left( x \right)} \right\|_{{L_2}\left( {0,1} \right)}^2\,\,,
    \end{equation*}
\begin{equation*}
\left| {{u_{xx}}(t,x)} \right| < \frac{{{C_{4}}}}{6} + \frac{{{C_{5}}}}{2}\left\| {{\omega ^{\left( 5 \right)}}\left( x \right)} \right\|_{{L_2}\left( {0,1} \right)}^2 + \frac{{{C_{6}}}}{2}\left\| {{\varphi ^{\left( 5 \right)}}\left( x \right)} \right\|_{{L_2}\left( {0,1} \right)}^2\,,
\end{equation*} 
    
    \begin{equation*}
\begin{array}{l}
\left| {{}^PD_{0t;\psi }^{\alpha ,\beta  + 1,\gamma ,\delta }u(t,x)} \right| < \frac{C}{2}\left( {\left\| {{\omega ^{\left( 1 \right)}}\left( x \right)} \right\|_{{L_2}\left( {0,1} \right)}^2} \right. + \left\| {{\omega ^{\left( 3 \right)}}\left( x \right)} \right\|_{{L_2}\left( {0,1} \right)}^2 + \left\| {{\omega ^{\left( 5 \right)}}\left( x \right)} \right\|_{{L_2}\left( {0,1} \right)}^2 + \left\| {{\omega ^{\left( 7 \right)}}\left( x \right)} \right\|_{{L_2}\left( {0,1} \right)}^2\\
 + \left\| {{\varphi ^{\left( 1 \right)}}\left( x \right)} \right\|_{{L_2}\left( {0,1} \right)}^2 + \left\| {{\varphi ^{\left( 3 \right)}}\left( x \right)} \right\|_{{L_2}\left( {0,1} \right)}^2 + \left\| {{\varphi ^{\left( 5 \right)}}\left( x \right)} \right\|_{{L_2}\left( {0,1} \right)}^2 + \left\| {{\varphi ^{\left( 7 \right)}}\left( x \right)} \right\|_{{L_2}\left( {0,1} \right)}^2 + \left\| {\frac{{\partial f\left( {t,x} \right)}}{{\partial x}}} \right\|_{{L_2}\left( {0,1} \right)}^2\\
 + \int\limits_0^T {\left\| {\frac{{\partial f\left( {s,x} \right)}}{{\partial x}}} \right\|_{{L_2}\left( {0,1} \right)}^2ds}  + \int\limits_0^T {\left\| {\frac{{{\partial ^3}f\left( {s,x} \right)}}{{\partial {x^3}}}} \right\|_{{L_2}\left( {0,1} \right)}^2ds}  + \int\limits_0^T {\left\| {\frac{{{\partial ^5}f\left( {s,x} \right)}}{{\partial {x^5}}}} \right\|_{{L_2}\left( {0,1} \right)}^2ds}  + \left. {\int\limits_0^T {\left\| {\frac{{{\partial ^7}f\left( {s,x} \right)}}{{\partial {x^7}}}} \right\|_{{L_2}\left( {0,1} \right)}^2ds}  + \frac{1}{6}} \right),
\end{array}
    \end{equation*}
    
    \begin{equation*}
\begin{array}{l}
\left| {{}^PD_{0t;\psi }^{\alpha ,\beta ,\gamma ,\delta }u(t,x)} \right| < \frac{N}{2}\left( {\left\| {{\omega ^{\left( 1 \right)}}\left( x \right)} \right\|_{{L_2}\left( {0,1} \right)}^2} \right. + \left\| {{\omega ^{\left( 3 \right)}}\left( x \right)} \right\|_{{L_2}\left( {0,1} \right)}^2 + \left\| {{\omega ^{\left( 5 \right)}}\left( x \right)} \right\|_{{L_2}\left( {0,1} \right)}^2 + \left\| {{\omega ^{\left( 7 \right)}}\left( x \right)} \right\|_{{L_2}\left( {0,1} \right)}^2\\
 + \left\| {{\varphi ^{\left( 1 \right)}}\left( x \right)} \right\|_{{L_2}\left( {0,1} \right)}^2 + \left\| {{\varphi ^{\left( 3 \right)}}\left( x \right)} \right\|_{{L_2}\left( {0,1} \right)}^2 + \left\| {{\varphi ^{\left( 5 \right)}}\left( x \right)} \right\|_{{L_2}\left( {0,1} \right)}^2 + \left\| {{\varphi ^{\left( 7 \right)}}\left( x \right)} \right\|_{{L_2}\left( {0,1} \right)}^2 + \int\limits_0^T {\left\| {\frac{{\partial f\left( {t,x} \right)}}{{\partial x}}} \right\|_{{L_2}\left( {0,1} \right)}^2dt} \\
 + \int\limits_0^T {\left\| {\frac{{{\partial ^3}f\left( {t,x} \right)}}{{\partial {x^3}}}} \right\|_{{L_2}\left( {0,1} \right)}^2dt}  + \int\limits_0^T {\left\| {\frac{{{\partial ^5}f\left( {t,x} \right)}}{{\partial {x^5}}}} \right\|_{{L_2}\left( {0,1} \right)}^2dt}  + \left. {\int\limits_0^T {\left\| {\frac{{{\partial ^7}f\left( {t,x} \right)}}{{\partial {x^7}}}} \right\|_{{L_2}\left( {0,1} \right)}^2dt}  + \frac{1}{6}} \right),
\end{array}
    \end{equation*}

\begin{equation*}
    \begin{array}{l}
\left| {{}^PD_{0t;\psi }^{\alpha ,\theta ,\gamma ,\delta }{u_{xx}}(t,x)} \right| < \frac{M}{2}\left( {\left\| {{\omega ^{\left( 3 \right)}}\left( x \right)} \right\|_{{L_2}\left( {0,1} \right)}^2} \right. + \left\| {{\omega ^{\left( 5 \right)}}\left( x \right)} \right\|_{{L_2}\left( {0,1} \right)}^2 + \left\| {{\omega ^{\left( 7 \right)}}\left( x \right)} \right\|_{{L_2}\left( {0,1} \right)}^2 + \left\| {{\omega ^{\left( 9 \right)}}\left( x \right)} \right\|_{{L_2}\left( {0,1} \right)}^2\\
 + \left\| {{\varphi ^{\left( 3 \right)}}\left( x \right)} \right\|_{{L_2}\left( {0,1} \right)}^2 + \left\| {{\varphi ^{\left( 5 \right)}}\left( x \right)} \right\|_{{L_2}\left( {0,1} \right)}^2 + \left\| {{\varphi ^{\left( 7 \right)}}\left( x \right)} \right\|_{{L_2}\left( {0,1} \right)}^2 + \left\| {{\varphi ^{\left( 9 \right)}}\left( x \right)} \right\|_{{L_2}\left( {0,1} \right)}^2 + \int\limits_0^T {\left\| {\frac{{{\partial ^3}f\left( {s,x} \right)}}{{\partial {x^3}}}} \right\|_{{L_2}\left( {0,1} \right)}^2ds} \\
 + \int\limits_0^T {\left\| {\frac{{{\partial ^5}f\left( {s,x} \right)}}{{\partial {x^5}}}} \right\|_{{L_2}\left( {0,1} \right)}^2ds}  + \int\limits_0^T {\left\| {\frac{{{\partial ^7}f\left( {s,x} \right)}}{{\partial {x^7}}}} \right\|_{{L_2}\left( {0,1} \right)}^2ds}  + \left. {\int\limits_0^T {\left\| {\frac{{{\partial ^9}f\left( {s,x} \right)}}{{\partial {x^9}}}} \right\|_{{L_2}\left( {0,1} \right)}^2ds}  + \frac{1}{6}} \right).
\end{array}
\end{equation*}
\end{theorem}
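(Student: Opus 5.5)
The plan is to show that the formal series \eqref{eq:57} and the series obtained by applying each operator in \eqref{eq:31} term by term are dominated, uniformly in $(t,x)\in\bar\Omega_{(T,1)}$ (after multiplying by $(\psi(t)-\psi(0))^{1-\beta}$ where needed), by convergent numerical series. Weierstrass' M-test then gives uniform convergence, justifies the termwise application of the operators, and hence shows that \eqref{eq:57} is a regular solution of the Problem. The initial conditions are checked via \eqref{eq:30} and Lemma~1, and the boundary conditions follow from $\sin\pi n x$.

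\textbf{Step 1: uniform bounds for the kernels.} The first step is a uniform bound on the quadrivariate function ${}_4E_1$ of \eqref{SF} with the parameters appearing in $T_n(t)$. I would use $\delta<0$ and $0<\gamma<1$ so that the Prabhakar-type inner sum over $m$ is bounded. The three remaining arguments are nonpositive, namely $-b\lambda_n(\cdot)^{\beta+1-\theta}/a$, $-(\cdot)/a$ and $-\lambda_n(\cdot)^{\beta+1}/a$. I would aim for an estimate of the type
\[
\bigl|{}_4E_1(\cdots)\bigr|\le \frac{C}{1+\lambda_n^{\kappa}(\psi(t)-\psi(0))^{\mu}},
\]
with $\kappa,\mu$ depending on $\beta-\theta$, $\beta+1-\gamma$, $1-\alpha$ and $1-\beta$. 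These exponents are tied to the hypotheses $0<\beta-\theta+2\varepsilon_1/\pi<2$, $0<\beta+1-\gamma+2\varepsilon_2/\pi<2$, and so on. Those conditions look like sector conditions of the form $\pi\rho/2<\min(\pi,\pi\rho)$ used in the classical Mittag-Leffler asymptotics $|E_{\rho,\mu}(-z)|\le C/(1+|z|)$. So I would bound each partial sum in $i$ and $k$ by a bivariate/Prabhakar Mittag-Leffler function and apply that asymptotic estimate; a crude $C$ independent of $n$ suffices for the lowest-order series.

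\textbf{Step 2: estimating each series.} With these bounds I would estimate, in turn, $u$, $u_{xx}$, ${}^PD^{\alpha,\beta,\gamma,\delta}_{0t;\psi}u$, ${}^PD^{\alpha,\beta+1,\gamma,\delta}_{0t;\psi}u$ and ${}^PD^{\alpha,\theta,\gamma,\delta}_{0t;\psi}u_{xx}$. The derivative series come from applying the composition formula from \cite{O21} term by term, which lowers the second parameter of ${}_4E_1$. For $u_{xx}$ we gain the factor $\lambda_n=(\pi n)^2$, and the derivatives add at most further powers of $\lambda_n$. I would then pass to Fourier coefficients of higher derivatives by integrating by parts. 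The boundary hypotheses on $\varphi,\omega$ and $\partial_x^kf$ at $x=0,1$ for even $k\le 8$ give
\[
A_n=\frac{c}{(\pi n)^{2l+1}}\,\varphi^{(2l+1)}_n,\qquad B_n=\frac{c}{(\pi n)^{2l+1}}\,\omega^{(2l+1)}_n,
\]
with the analogous formula for $f_n$, where $\varphi^{(2l+1)}_n$ denotes a cosine coefficient. After that, Cauchy--Schwarz and $2|xy|\le x^2+y^2$ give bounds such as
\[
\sum_n\frac{|\omega^{(3)}_n|}{n}\le\frac12\sum_n\frac1{n^2}+\frac12\sum_n|\omega^{(3)}_n|^2 .
\]
Bessel's inequality bounds the second sum by $\|\omega^{(3)}\|^2_{L_2}$, and $\sum 1/n^2=\pi^2/6$ is the source of the constants $\tfrac16$ and $\tfrac12$ in the stated estimates. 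The $f$-term is handled the same way after bounding the convolution integral by $\int_0^T\|\partial_x^kf\|^2ds$. The hypothesis $f(\cdot,x)\in C^1$ is used for the term $\|\partial_xf(t,\cdot)\|$ appearing in the ${}^PD^{\alpha,\beta+1}$ estimate, which arises from differentiating the Duhamel integral.

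\textbf{Main obstacle.} I expect the main obstacle to be Step 1: obtaining an $n$-uniform decay bound for ${}_4E_1$ with negative arguments that grow like $\lambda_n$. The four-fold series has alternating terms and the $\Gamma(i+j+k+1)$ numerator, so direct absolute summation diverges for large $\lambda_n$. I would first try to resum over $j$ and $i$ into an integral representation, a Laplace-type contour representation of the multinomial Mittag-Leffler function. I would then estimate that representation using the sector conditions encoded in the $\varepsilon$-hypotheses. If this fails, I would fall back on the a priori estimate of Theorem~1, applied to the single mode $T_n(t)\sin\pi nx$, to obtain $|T_n|$-bounds directly.
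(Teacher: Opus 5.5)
\textbf{The gap is in Step~1.} Your Step~2 is essentially the paper's bookkeeping: integration by parts using the vanishing even-order derivatives at $x=0,1$, then Cauchy--Schwarz, $2|xy|\le x^2+y^2$, Bessel's inequality and $\sum n^{-2}=\pi^2/6$, which is where the constants $\frac16$ and $\frac12$ come from. Step~1, however, carries the whole analytic weight and is only announced, not proved. None of your three routes supplies it:
\begin{itemize}
\item \emph{Partial sums.} The estimate $|E_{\rho,\mu}(-z)|\le C/(1+|z|)$ is a property of the complete alternating series. It cannot be applied to partial sums in $i$ and $k$, nor to sums in which the remaining indices are majorised in absolute value. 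Such majorants grow exponentially in a power of $\lambda_n$.
\item \emph{Contour representation.} This is only named, not carried out.
\item \emph{Fallback via Theorem~1.} This fails. The left-hand side of that inequality contains $-2\varepsilon_1\int_0^t\psi'(s)\|u_{xx}(s,x)\|^2_{L_2(0,1)}ds$ and $-2b\varepsilon_2\int_0^t\psi'(s)\|{}^PD^{\alpha,\theta,\gamma,\delta}_{0s;\psi}u_{xx}(s,x)\|^2_{L_2(0,1)}ds$ with the wrong sign. For a single mode these are $\lambda_n^2$ times quantities built from $T_n$, so the inequality gives no bound on $T_n$, let alone a pointwise one.
\end{itemize}

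\textbf{The missing idea is that no decay in $n$ is needed,} because the hypotheses leave a large regularity surplus. The paper proceeds as follows.
\begin{itemize}
\item It splits the series by peeling off the $k=0$, then $j=0$, then $i=0$ terms (identity \eqref{UE}), writing $u=u_1+u_2+u_3$. After index shifts and the termwise formulas of Lemma~2, each piece of $u$ and of its fractional derivatives is an explicit factor of $-(\pi n)^2$ times a Prabhakar function, a two-parameter $E_{\alpha,\beta}$, a bivariate $E_8$, a trivariate ${}_3E_{13}$ or a shifted ${}_4E_1$.
\item It then uses only $|E^\gamma_{\alpha,\beta}|\le M$, $|E_8|\le C_1$, and the decay $|E_{\alpha,\beta}(z)|\le C/(1+|z|)$ for the two-parameter pieces; this is where the $\varepsilon_i$ sector conditions enter.
\item For the multivariate pieces it uses bounds from \cite{KKh26} that may grow linearly in the $\lambda_n$-argument: $|{}_3E_{13}|\le(1+|y|)\bar M$ and $|{}_4E_1|\le(1+|z|)\bar M_1$.
\end{itemize}
The resulting polynomial growth in $\lambda_n$ is paid for by the smoothness of the data. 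For instance, $\lambda_n|B_n|=|\omega^{(3)}_n|/(\pi n)$, with $\omega^{(3)}_n$ the cosine coefficient of $\omega'''$. This is exactly why $\|\omega^{(3)}\|$ appears in the bound for $|u|$, $\|\omega^{(5)}\|$ in that for $|u_{xx}|$, and derivatives up to order nine in that for ${}^PD^{\alpha,\theta,\gamma,\delta}_{0t;\psi}u_{xx}$.

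An $n$-uniform decay bound as in your Step~1 would give a stronger theorem under weaker hypotheses. Without it, however, your argument has no kernel estimate, and the M-test in Step~2 has nothing to act on.
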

Here ${\varepsilon _1},{\varepsilon _2},{\varepsilon _3},{\varepsilon _4}$ are sufficiently small, and $C,M,N$ are positive real numbers. 

Before the proof of the theorem, for convenience, we present the following lemma
\begin{lemma}
    If $\alpha_{1},\alpha_{2},\alpha_{3},\beta_{1},\beta_{2},\beta_{3},\beta_{5},\beta,\theta_{2}\in \mathbb{R}^{+},$ $\delta_{1},\delta_{2},\delta_{3},\delta_{5},\delta_{6},\delta_{7},\omega_{1},\omega_{2},\omega_{3},\omega_{4} \in \mathbb{R},$ $f(t)\in C^{1} (0, T)$, then the following formulas hold
    \begin{equation}
 \right.} \right) = \sum\limits_{m = 0}^{ + \infty } {\sum\limits_{n = 0}^{ + \infty } {\frac{{{{\left( {{\gamma _1}} \right)}_{{\alpha _1}m + {\beta _1}n}}}}{{\Gamma \left( {{\delta _1} + {\alpha _2}m + {\beta _2}n} \right)}}\frac{{{x^m}}}{{\Gamma \left( {{\delta _2} + {\alpha _3}m} \right)}}\frac{{{y^n}}}{{\Gamma \left( {{\delta _3} + {\beta _3}n} \right)}}} } 
$$
is a bivariate Mittag-Leffler-type function studied in \cite{KKh26},
$$
{}_3{{\rm E}_{13}}\left( %
 \right)} \right. =
$$
$$
\sum\limits_{i = 0}^{ + \infty } {\sum\limits_{k = 0}^{ + \infty } {\sum\limits_{m = 0}^{ + \infty } {\frac{{\Gamma \left( {{\alpha _1}i + {\beta _1}k + {\delta _1}} \right)\Gamma \left( {{\beta _2}k + {\gamma _1}m + {\delta _2}} \right)\,\,\,{x^i}{y^k}{z^m}}}{{\Gamma \left( {{\alpha _2}i + {\beta _3}k + {\gamma _2}m + {\delta _3}} \right)\Gamma \left( {{\beta _4}k + {\delta _4}} \right)\Gamma \left( {{\alpha _3}i + {\delta _5}} \right)\Gamma \left( {{\beta _5}k + {\delta _6}} \right)\Gamma \left( {{\gamma _3}m + {\delta _7}} \right)}}} } }
$$
is a trivariate Mittag-Leffler-type function introduced and studied in \cite{KKh26}.

\textbf{Proof:}
Let us first prove formula~\eqref{eqv:59}. For this purpose, we rewrite 
the left-hand side of formula \eqref{eqv:59} by using 
function \eqref{SF}  as follows:
    $$
    \begin{array}{l}
{}^PI_{0t;\psi }^{{\theta _2},\beta ,\gamma ,{\omega _4}}\left( {{{\left( {{\psi _0}\left( t \right)} \right)}^{{\delta _3} - 1}}} \right. \times \\
\left. {{}_4{E_1}\left( {\begin{array}{l}
{\alpha _1},{\beta _1},{\gamma _1},{\delta _1};{\gamma _2},1,{\delta _2};\\
{\alpha _2},{\beta _2},{\gamma _3},{\theta _2},{\delta _3};{\gamma _2},{\delta _2};{\alpha _3},{\delta _5};{\beta _3},{\delta _6};{\gamma _5},{\delta _7};1,1;
\end{array}}
 \mathrel{\left | {\vphantom {\begin{array}{l}
{\alpha _1},{\beta _1},{\gamma _1},{\delta _1};{\gamma _2},1,{\delta _2};\\
{\alpha _2},{\beta _2},{\gamma _3},{\theta _2},{\delta _3};{\gamma _2},{\delta _2};{\alpha _3},{\delta _5};{\beta _3},{\delta _6};{\gamma _5},{\delta _7};1,1;
\end{array} \begin{array}{l}
{\omega _1}{\left( {{\psi _0}\left( t \right)} \right)^{{\alpha _2}}};{\omega _2}{\left( {{\psi _0}\left( t \right)} \right)^{{\beta _2}}}\\
{\omega _3}{\left( {{\psi _0}\left( t \right)} \right)^{{\gamma _3}}};{\omega _4}{\left( {{\psi _0}\left( t \right)} \right)^{{\theta _2}}}
\end{array}}}
 \right. \kern-\nulldelimiterspace}
 {\begin{array}{l}
{\omega _1}{\left( {{\psi _0}\left( t \right)} \right)^{{\alpha _2}}};{\omega _2}{\left( {{\psi _0}\left( t \right)} \right)^{{\beta _2}}}\\
{\omega _3}{\left( {{\psi _0}\left( t \right)} \right)^{{\gamma _3}}};{\omega _4}{\left( {{\psi _0}\left( t \right)} \right)^{{\theta _2}}}
\end{array}} \right) } \right)\\
 = {}^PI_{0t;\psi }^{{\theta _2},\beta ,\gamma ,{\omega _4}}\left( {{{\left( {{\psi _0}\left( t \right)} \right)}^{{\delta _3} - 1}}\sum\limits_{i = 0}^{ + \infty } {\sum\limits_{j = 0}^{ + \infty } {\sum\limits_{k = 0}^{ + \infty } {\sum\limits_{m = 0}^{ + \infty } {\frac{{\Gamma \left( {{\alpha _1}i + {\beta _1}j + {\gamma _1}k + {\delta _1}} \right){{\left( {{\gamma _2}k + {\delta _2}} \right)}_m}}}{{\Gamma \left( {{\alpha _2}i + {\beta _2}j + {\gamma _3}k + {\theta _2}m + {\delta _3}} \right)}} \times \,\,} } } } } \right.\\
\left. {\frac{{{{\left( {{\omega _1}{{\left( {{\psi _0}\left( t \right)} \right)}^{{\alpha _2}}}} \right)}^i}}}{{\Gamma \left( {{\alpha _3}i + {\delta _5}} \right)}}\frac{{{{\left( {{\omega _2}{{\left( {{\psi _0}\left( t \right)} \right)}^{{\beta _2}}}} \right)}^j}}}{{\Gamma \left( {{\beta _3}j + {\delta _6}} \right)}}\frac{{{{\left( {{\omega _3}{{\left( {{\psi _0}\left( t \right)} \right)}^{{\gamma _3}}}} \right)}^k}}}{{\Gamma \left( {{\gamma _5}k + {\delta _7}} \right)}}\frac{{{{\left( {{\omega _4}{{\left( {{\psi _0}\left( t \right)} \right)}^{{\theta _2}}}} \right)}^m}}}{{\Gamma \left( {m + 1} \right)}}} \right)\\
 = \sum\limits_{i = 0}^{ + \infty } {\sum\limits_{j = 0}^{ + \infty } {\sum\limits_{k = 0}^{ + \infty } {\frac{{\Gamma \left( {{\alpha _1}i + {\beta _1}j + {\gamma _1}k + {\delta _1}} \right){{\left( {{\omega _1}} \right)}^i}{{\left( {{\omega _2}} \right)}^j}{{\left( {{\omega _3}} \right)}^k}}}{{\Gamma \left( {{\alpha _3}i + {\delta _5}} \right)\Gamma \left( {{\beta _3}j + {\delta _6}} \right)\Gamma \left( {{\gamma _5}k + {\delta _7}} \right)}}} } } \\
 \times {}^PI_{0t;\psi }^{{\theta _2},\beta ,\gamma ,{\omega _4}}\left( {{{\left( {\psi \left( t \right) - \psi \left( 0 \right)} \right)}^{{\alpha _2}i + {\beta _2}j + {\gamma _3}k + {\delta _3} - 1}}E_{{\theta _2},{\alpha _2}i + {\beta _2}j + {\gamma _3}k + {\delta _3}}^{{\gamma _2}k + {\delta _2}}\left( {{\omega _4}{{\left( {\psi \left( t \right) - \psi \left( 0 \right)} \right)}^{{\theta _2}}}} \right)} \right).
\end{array}
    $$
Applying formula in \cite{O21}
$$
\begin{array}{l}
{}^PI_{0t;\psi }^{\alpha ,\beta ,\gamma ,\delta }\left( {{{\left( {\psi \left( t \right) - \psi \left( 0 \right)} \right)}^{\nu  - 1}}E_{\alpha ,\nu }^\sigma \left( {\delta {{\left( {\psi \left( t \right) - \psi \left( 0 \right)} \right)}^\alpha }} \right)} \right)\\
 = {\left( {\psi \left( t \right) - \psi \left( 0 \right)} \right)^{\nu  + \beta  - 1}}E_{\alpha ,\nu  + \beta }^{\sigma  + \gamma }\left( {\delta {{\left( {\psi \left( t \right) - \psi \left( 0 \right)} \right)}^\alpha }} \right)\,,
\end{array}
$$
where $\alpha,\beta,\gamma,\delta,\nu,\sigma \in\mathbb{R}$ with $\alpha  > 0,\beta > 0,\nu  > 0,$
we get
$$
\begin{array}{l}
\sum\limits_{i = 0}^{ + \infty } {\sum\limits_{j = 0}^{ + \infty } {\sum\limits_{k = 0}^{ + \infty } {\frac{{\Gamma \left( {{\alpha _1}i + {\beta _1}j + {\gamma _1}k + {\delta _1}} \right){{\left( {{\omega _1}} \right)}^i}{{\left( {{\omega _2}} \right)}^j}{{\left( {{\omega _3}} \right)}^k}}}{{\Gamma \left( {{\alpha _3}i + {\delta _5}} \right)\Gamma \left( {{\beta _3}j + {\delta _6}} \right)\Gamma \left( {{\gamma _5}k + {\delta _7}} \right)}}} } } \\
 \times {\left( {\psi \left( t \right) - \psi \left( 0 \right)} \right)^{{\alpha _2}i + {\beta _2}j + {\gamma _3}k + {\delta _3} + \beta  - 1}}E_{\alpha ,{\alpha _2}i + {\beta _2}j + {\gamma _3}k + {\delta _3} + \beta }^{{\gamma _2}k + {\delta _2} + \gamma }\left( {{\omega _4}{{\left( {\psi \left( t \right) - \psi \left( 0 \right)} \right)}^{{\theta _2}}}} \right)\\
 = {\left( {{\psi _0}\left( t \right)} \right)^{{\delta _3} + \beta  - 1}} \times \\
{}_4{E_1}\left( {\begin{array}{l}
{\alpha _1},{\beta _1},{\gamma _1},{\delta _1};{\gamma _2},1,{\delta _2} + \gamma ;\\
{\alpha _2},{\beta _2},{\gamma _3},{\theta _2},{\delta _3} + \beta ;{\gamma _2},{\delta _2} + \gamma ;{\alpha _3},{\delta _5};{\beta _3},{\delta _6};{\gamma _5},{\delta _7};1,1;
\end{array}}
 \mathrel{\left | {\vphantom {\begin{array}{l}
{\alpha _1},{\beta _1},{\gamma _1},{\delta _1};{\gamma _2},1,{\delta _2} + \gamma ;\\
{\alpha _2},{\beta _2},{\gamma _3},{\theta _2},{\delta _3} + \beta ;{\gamma _2},{\delta _2} + \gamma ;{\alpha _3},{\delta _5};{\beta _3},{\delta _6};{\gamma _5},{\delta _7};1,1;
\end{array} \begin{array}{l}
{\omega _1}{\left( {{\psi _0}\left( t \right)} \right)^{{\alpha _2}}};{\omega _2}{\left( {{\psi _0}\left( t \right)} \right)^{{\beta _2}}}\\
{\omega _3}{\left( {{\psi _0}\left( t \right)} \right)^{{\gamma _3}}};{\omega _4}{\left( {{\psi _0}\left( t \right)} \right)^{{\theta _2}}}
\end{array}}}
 \right. \kern-\nulldelimiterspace}
 {\begin{array}{l}
{\omega _1}{\left( {{\psi _0}\left( t \right)} \right)^{{\alpha _2}}};{\omega _2}{\left( {{\psi _0}\left( t \right)} \right)^{{\beta _2}}}\\
{\omega _3}{\left( {{\psi _0}\left( t \right)} \right)^{{\gamma _3}}};{\omega _4}{\left( {{\psi _0}\left( t \right)} \right)^{{\theta _2}}}
\end{array}} \right) .
\end{array}
$$
Similarly, formulas \eqref{eqv:51}-\eqref{eqv:58} and \eqref{eqv:60}-\eqref{eqv:62} can be proved by using the following formulas in \cite{O21} and \cite{O22}
$$
\begin{array}{l}
{}^PI_{0t;\psi }^{\alpha ,\beta ,\gamma ,\delta }\left( {{{\left( {\psi \left( t \right) - \psi \left( 0 \right)} \right)}^{\nu  - 1}}E_{\alpha ,\nu }^\sigma \left( {\delta {{\left( {\psi \left( t \right) - \psi \left( 0 \right)} \right)}^\alpha }} \right)} \right)\\
 = {\left( {\psi \left( t \right) - \psi \left( 0 \right)} \right)^{\nu  + \beta  - 1}}E_{\alpha ,\nu  + \beta }^{\sigma  + \gamma }\left( {\delta {{\left( {\psi \left( t \right) - \psi \left( 0 \right)} \right)}^\alpha }} \right)\,,\,\,\,\,
 \alpha  > 0,\beta > 0,\nu  > 0,
\end{array}
$$

$$
^PI_{0t;\psi }^{\alpha ,\beta ,{\kern 1pt} \gamma ,\delta }\,{\,^P}I_{0t;\psi }^{\alpha ,\nu ,{\kern 1pt} \sigma ,\delta }f(t) = {\,^P}I_{0t;\psi }^{\alpha ,\nu ,{\kern 1pt} \sigma ,\delta }\,{\,^P}I_{0t;\psi }^{\alpha ,\beta ,{\kern 1pt} \gamma ,\delta }f(t){ = ^P}I_{0t;\psi }^{\alpha ,\beta  + \nu ,{\kern 1pt} \gamma  + \sigma ,\delta }f(t),\,\,
\alpha  > 0,\beta > 0,\nu  > 0,
$$

$$
\begin{array}{l}
{}^PD_{0t;\psi }^{\alpha ,\beta ,\gamma ,\delta }\left( {{{\left( {\psi \left( t \right) - \psi \left( 0 \right)} \right)}^{\nu  - 1}}E_{\alpha ,\nu }^\sigma \left( {\delta {{\left( {\psi \left( t \right) - \psi \left( 0 \right)} \right)}^\alpha }} \right)} \right)\\
 = {\left( {\psi \left( t \right) - \psi \left( 0 \right)} \right)^{\nu  - \beta  - 1}}E_{\alpha ,\nu  - \beta }^{\sigma  - \gamma }\left( {\delta {{\left( {\psi \left( t \right) - \psi \left( 0 \right)} \right)}^\alpha }} \right)\,,\,\,\,\,
 \alpha  > 0,\beta  \ge 0,\nu  > 0,
\end{array}
$$
 
$$
{}^PD_{0t;\psi }^{\alpha ,\beta_{1} ,\gamma_{1} ,\delta }\,\,{}^PI_{0t;\psi }^{\alpha ,\beta_{2} ,\,\gamma_{2} ,\delta }f(t)={}^PI_{0t;\psi }^{\alpha ,\beta_{2}-\beta_{1} ,\,\gamma_{2}-\gamma_{1} ,\delta }f(t),\,\,\,
\alpha>0,\beta_{1}>0,\beta_{2}>0,
$$
where $\alpha,\beta_{1},\beta_{2},\gamma_{1},\gamma_{2},\delta\in\mathbb{R},$ $f\in C^{1}(\Omega)$.\\

Now we show a technique for a quadrivariate function, and this technique help us to understand our proving ideas of Theorem 2. 

Let us assume that, the expression
$$
\sum\limits_{i = 0}^{ + \infty } {\sum\limits_{j = 0}^{ + \infty } {\sum\limits_{k = 0}^{ + \infty } {{A_i}{B_j}{C_k}  } } }  
$$
be any trivariate function. 
First, we separate $k=0$ terms, then $j=0$ terms, finally $i=0$ terms. Consequently, we have
$$

$$
In other words, we obtain the following useful equality
\begin{equation}
    %
\label{UE}
\end{equation}
 We separate the function \eqref{eq:57} into three parts as follows: 
$$
%
$$
Then we rewrite each function using the Prabhakar function. After that,  we apply the equality \eqref{UE} to each  $u_{1}(t,x),u_{2}(t,x),u_{3}(t,x)$. It helps us to guarantee uniform convergence of the fractional terms of the equation \eqref{eq:31}.
$$
%
$$
Here ${\psi _0}\left( t \right) = \psi \left( t \right) - \psi \left( 0 \right),\,\,{\psi _s}\left( t \right) = \psi \left( t \right) - \psi \left( s \right),\lambda_{n}=(\pi n)^2.$

Next, we find fractional terms of the equation \eqref{eq:31} by applying formulas in \cite{O21}
$$
%
$$
where
$\alpha,\beta,\gamma,\delta,\nu,\sigma \in\mathbb{R}$ with $\alpha  > 0,\beta  \ge 0,\nu  > 0;$
$$
{}^PD_{0t;\psi }^{\alpha ,\beta ,\gamma ,\delta }\,\,{}^PI_{0t;\psi }^{\alpha ,\beta ,\,\gamma ,\delta }f(t)=f(t),
$$
with
$f \in C^{1}(\Omega)$,\, $\alpha,\beta,\gamma, \delta \in \mathbb{R}$,\, $\alpha >0$,\, $\beta>0$.
And formulas \eqref{eqv:51}-\eqref{eqv:62} to last form of functions $u_{1}(t,x),u_{2}(t,x),u_{3}(t,x)$. Then we have the following results:\\

$$
   %
} \right\rangle \\
 \times \left. {{f_n}\left( s \right)ds} \right)\left( { - {{\left( {\pi n} \right)}^2}} \right)\sin \pi nx.
\end{array}
$$
Here, ${E_{\alpha ,\beta }}\left( z \right)$ is two parametric Mittag -Leffler function, see \cite{Podlubny99} for more details.

Then, taking conditions of the theorem into account and using the estimates:
$$
\left| {E_{\alpha ,\beta }^\gamma ( - \lambda {t^\alpha })} \right| \le M,\,\,\,0<M=Const,
$$
where \,$\alpha \in \mathbb{R}^{+},\beta ,\gamma ,\lambda \in \mathbb{R}$,\, $t\in [a,b],\, a \ge 0,$ see \cite{P71} for more details;

$$
\left| {{E_{\alpha ,\beta }}\left( z \right)} \right| \le \frac{C}{{1 + \left| z \right|}},\,\,\left( {\mu  \le \left| {\arg \left( z \right)} \right| \le \pi ,\,\,\left| z \right| \ge 0\,} \right),\,
$$
where\, $\alpha<2,$ $\beta$ is an arbitrary real number, $\mu$ is such that $\frac{{\pi \alpha }}{2} < \mu  < \min \left\{ {\pi ,\pi \alpha } \right\}$ and $C$  is real constant;
$$
 \left| {{E_8}\left( \begin{array}{l}
\,\,\,\,\,\,{\gamma _1},{\alpha _1},{\beta _1};\\
{\delta _1},{\alpha _2},{\beta _2};{\delta _2},{\alpha _3};{\delta _3},{\beta _3};
\end{array} \right.\left| {\left. \begin{array}{l}
x\\
y
\end{array} \right)} \right.} \right| \le C_{1}\,,
$$
$$
\left| {{}_3{{\rm E}_{13}}\left( \begin{array}{l}
{\alpha _1},{\beta _1},{\delta _1};{\beta _2},{\gamma _1},{\delta _2};\\
{\alpha _2},{\beta _3},{\gamma _2},{\delta _3};{\beta _4},{\delta _4};{\alpha _3},{\delta _5};{1},{1};{\gamma _3},{\delta _7};
\end{array} \right.\left| {\left. \begin{array}{l}
x,y\\
\,\,\,z
\end{array} \right)} \right.} \right| \le \left( {1 + \left| y \right|} \right)\bar M,
$$
$$
\left| {{}_3{{\rm E}_{13}}\left( \begin{array}{l}
{\alpha _1},{\beta _1},{\delta _1};{\beta _2},{\gamma _1},{\delta _2};\\
{\alpha _2},{\beta _3},{\gamma _2},{\delta _3};{\beta _4},{\delta _4};{\alpha _3},{\delta _5};{1},{2};{\gamma _3},{\delta _7};
\end{array} \right.\left| {\left. \begin{array}{l}
x,y\\
\,\,\,z
\end{array} \right)} \right.} \right| \le \left( {1 + \left| y \right|} \right)\hat M,
$$
$$
\left| {{}_4{E}_1\left( {\begin{array}{l}
{\alpha _1},{\beta _1},{\gamma _1},{\delta _1};{\gamma _2},{\theta _1},{\delta _2};\\
{\alpha _2},{\beta _2},{\gamma _3},{\theta _2},{\delta _3};{\gamma _4},{\delta _4};{\alpha _3},{\delta _5};{\beta _3},{\delta _6};1,1;{\theta _3},{\delta _8};
\end{array}}
 \mathrel{\left | {\vphantom {\begin{array}{l}
{\alpha _1},{\beta _1},{\gamma _1},{\delta _1};{\gamma _2},{\theta _1},{\delta _2};\\
{\alpha _2},{\beta _2},{\gamma _3},{\theta _2},{\delta _3};{\gamma _4},{\delta _4};{\alpha _3},{\delta _5};{\beta _3},{\delta _6};1,1;{\theta _3},{\delta _8};
\end{array} \begin{array}{l}
x;y\\
z;t
\end{array}}}
 \right. \kern-\nulldelimiterspace}
 {\begin{array}{l}
x;y\\
z;t
\end{array}} \right) } \right| \le (1+\left| z \right|)\bar M_1,
$$
$$
\left| {{}_4{E}_1\left( {\begin{array}{l}
{\alpha _1},{\beta _1},{\gamma _1},{\delta _1};{\gamma _2},{\theta _1},{\delta _2};\\
{\alpha _2},{\beta _2},{\gamma _3},{\theta _2},{\delta _3};{\gamma _4},{\delta _4};{\alpha _3},{\delta _5};{\beta _3},{\delta _6};1,2;{\theta _3},{\delta _8};
\end{array}}
 \mathrel{\left | {\vphantom {\begin{array}{l}
{\alpha _1},{\beta _1},{\gamma _1},{\delta _1};{\gamma _2},{\theta _1},{\delta _2};\\
{\alpha _2},{\beta _2},{\gamma _3},{\theta _2},{\delta _3};{\gamma _4},{\delta _4};{\alpha _3},{\delta _5};{\beta _3},{\delta _6};1,2;{\theta _3},{\delta _8};
\end{array} \begin{array}{l}
x;y\\
z;t
\end{array}}}
 \right. \kern-\nulldelimiterspace}
 {\begin{array}{l}
x;y\\
z;t
\end{array}} \right) } \right| \le (1+\left| z \right|)\hat M_1
$$
Here, $C_{1},\bar M,\hat M,\bar M_1,\hat M_1$ are real constants. And other conditions of the parameters of the functions are given in \cite{KKh26}.

\newtheorem{example}{Example}
\begin{example}
    
Let us present a graphical illustration of a particular case of the solution, based on a specific choice of the given functions.
Let $\varphi \left( x \right) = \sin \pi x,\,\,\,\omega \left( x \right) = \sin \pi x,\,\,f\left( {t,x} \right) = \frac{{{{\left( {\psi \left( t \right) - \psi \left( 0 \right)} \right)}^{3\beta  + 1}}}}{{\Gamma \left( {3\beta  + 2} \right)}}\sin \pi x\,.$ Then coefficients ${A_1} = {B_1} = 1,\,\,{f_1}\left( t \right) = \frac{{{{\left( {\psi \left( t \right) - \psi \left( 0 \right)} \right)}^{3\beta  + 1}}}}{{\Gamma \left( {3\beta  + 2} \right)}}$ for $n=1$ and ${A_n} = {B_n} = {f_n}\left( t \right) = 0$ for other $n$. Consequently, the solution to the Problem obtains the following form:
$$
\begin{array}{l}
u(t,x) = \Bigg[ {{{\left( {\psi \left( t \right) - \psi \left( 0 \right)} \right)}^{\beta  - 1}}} \times \\
 {}_4{E_1}\left( {\begin{array}{l}
1,1,1,1;\gamma ,1,\gamma;\\
\beta  + 1 - \theta ,1,\beta  + 1,\alpha ,\beta ;\gamma ,\gamma ;1,1;1,1;1,1;1,1;
\end{array}}
 \mathrel{\left | {\vphantom {\begin{array}{l}
1,1,1,1;\gamma ,1,y;\\
\beta  + 1 - \theta ,1,\beta  + 1,\alpha ,\beta ;\gamma ,\gamma ;1,1;1,1;1,1;1,1;
\end{array} \begin{array}{l}
\left( { - b\,{\pi ^2}\frac{{{{\left( {\psi \left( t \right) - \psi \left( 0 \right)} \right)}^{\beta  + 1 - \theta }}}}{a}} \right);\left( { - \frac{{\left( {\psi \left( t \right) - \psi \left( 0 \right)} \right)}}{a}} \right)\\
\left( { - {\pi ^2}\frac{{{{\left( {\psi \left( t \right) - \psi \left( 0 \right)} \right)}^{\beta  + 1}}}}{a}} \right);\left( {\delta {{\left( {\psi \left( t \right) - \psi \left( 0 \right)} \right)}^\alpha }} \right)
\end{array}}}
 \right. \kern-\nulldelimiterspace}
 {\begin{array}{l}
\left( { - b\,{\pi ^2}\frac{{{{\left( {\psi \left( t \right) - \psi \left( 0 \right)} \right)}^{\beta  + 1 - \theta }}}}{a}} \right);\left( { - \frac{{\left( {\psi \left( t \right) - \psi \left( 0 \right)} \right)}}{a}} \right)\\
\left( { - {\pi ^2}\frac{{{{\left( {\psi \left( t \right) - \psi \left( 0 \right)} \right)}^{\beta  + 1}}}}{a}} \right);\left( {\delta {{\left( {\psi \left( t \right) - \psi \left( 0 \right)} \right)}^\alpha }} \right)
\end{array}} \right)+ \\
  \frac{{\left( {a + 1} \right)}}{a}{\left( {\psi \left( t \right) - \psi \left( 0 \right)} \right)^\beta }\times\\
 {}_4{E_1}\left( {\begin{array}{l}
1,1,1,1;\gamma ,1,\gamma;\\
\beta  + 1 - \theta ,1,\beta  + 1,\alpha ,\beta  + 1;\gamma ,\gamma ;1,1;1,1;1,1;1,1;
\end{array}}
 \mathrel{\left | {\vphantom {\begin{array}{l}
1,1,1,1;\gamma ,1,y;\\
\beta  + 1 - \theta ,1,\beta  + 1,\alpha ,\beta  + 1;\gamma ,\gamma ;1,1;1,1;1,1;1,1;
\end{array} \begin{array}{l}
\left( { - b\,{\pi ^2}\frac{{{{\left( {\psi \left( t \right) - \psi \left( 0 \right)} \right)}^{\beta  + 1 - \theta }}}}{a}} \right);\left( { - \frac{{\left( {\psi \left( t \right) - \psi \left( 0 \right)} \right)}}{a}} \right)\\
\left( { - {\pi ^2}\frac{{{{\left( {\psi \left( t \right) - \psi \left( 0 \right)} \right)}^{\beta  + 1}}}}{a}} \right);\left( {\delta {{\left( {\psi \left( t \right) - \psi \left( 0 \right)} \right)}^\alpha }} \right)
\end{array}}}
 \right. \kern-\nulldelimiterspace}
 {\begin{array}{l}
\left( { - b\,{\pi ^2}\frac{{{{\left( {\psi \left( t \right) - \psi \left( 0 \right)} \right)}^{\beta  + 1 - \theta }}}}{a}} \right);\left( { - \frac{{\left( {\psi \left( t \right) - \psi \left( 0 \right)} \right)}}{a}} \right)\\
\left( { - {\pi ^2}\frac{{{{\left( {\psi \left( t \right) - \psi \left( 0 \right)} \right)}^{\beta  + 1}}}}{a}} \right);\left( {\delta {{\left( {\psi \left( t \right) - \psi \left( 0 \right)} \right)}^\alpha }} \right)
\end{array}} \right)+ \\
  \frac{1}{a}{\left( {\psi \left( s \right) - \psi \left( 0 \right)} \right)^{4\beta  + 2}}\times\\
\left. {  {}_4{E_1}\left( {\begin{array}{l}
1,1,1,1;\gamma ,1,\gamma;\\
\beta  + 1 - \theta ,1,\beta  + 1,\alpha ,4\beta  + 3;\gamma ,\gamma ;1,1;1,1;1,1;1,1;
\end{array}}
 \mathrel{\left | {\vphantom {\begin{array}{l}
1,1,1,1;\gamma ,1,y;\\
\beta  + 1 - \theta ,1,\beta  + 1,\alpha ,4\beta  + 3;\gamma ,\gamma ;1,1;1,1;1,1;1,1;
\end{array} \begin{array}{l}
\left( { - b\,{\pi ^2}\frac{{{{\left( {\psi \left( t \right) - \psi \left( 0 \right)} \right)}^{\beta  + 1 - \theta }}}}{a}} \right);\left( { - \frac{{\left( {\psi \left( t \right) - \psi \left( 0 \right)} \right)}}{a}} \right)\\
\left( { - \,{\pi ^2}\frac{{{{\left( {\psi \left( t \right) - \psi \left( 0 \right)} \right)}^{\beta  + 1}}}}{a}} \right);\left( {\delta {{\left( {\psi \left( t \right) - \psi \left( 0 \right)} \right)}^\alpha }} \right)
\end{array}}}
 \right. \kern-\nulldelimiterspace}
 {\begin{array}{l}
\left( { - b\,{\pi ^2}\frac{{{{\left( {\psi \left( t \right) - \psi \left( 0 \right)} \right)}^{\beta  + 1 - \theta }}}}{a}} \right);\left( { - \frac{{\left( {\psi \left( t \right) - \psi \left( 0 \right)} \right)}}{a}} \right)\\
\left( { - \,{\pi ^2}\frac{{{{\left( {\psi \left( t \right) - \psi \left( 0 \right)} \right)}^{\beta  + 1}}}}{a}} \right);\left( {\delta {{\left( {\psi \left( t \right) - \psi \left( 0 \right)} \right)}^\alpha }} \right)
\end{array}} \right) } \right]\times\\
 \sin \pi x\,.
\end{array}
$$
\end{example}

\bibliographystyle{cas-model2-names}

\bibliography{cas-refs}



\end{document}